\newcommand{\thistime}{\expandafter\calctimeA\pdfcreationdate\@nil}
\def\calctimeA#1:#2#3#4#5#6#7#8#9{\calctimeB}
\def\calctimeB#1#2#3#4#5\@nil{#1#2:#3#4}
   \def\R{\mathbb{R}}
   \def\N{\mathbb{N}}
   \def\1{{\rm I\mskip -10.5mu 1}}
   \def\e{{\varepsilon}}
   \def\D{{\nabla}}
   \def\cA{{\mathcal A}}
   \def\cB{{\mathcal B}}
   \def\cC{{\mathcal C}}
   \def\cD{{\mathcal D}}
   \def\cG{{\mathcal G}}
   \def\cL{{\mathcal L}}
   \def\cP{{\mathcal P}}
   \def\supp{\mathop{\rm supp}\nolimits}
   \def\loc{\mathop{\rm loc}\nolimits}
\theoremstyle{definition}
\newtheorem{df}{Definition}[section]
\theoremstyle{remark}
\newtheorem{rem}[df]{Remark}
\theoremstyle{plain}
\newtheorem{prop}[df]{Proposition}
\newtheorem{lemma}[df]{Lemma}
\newtheorem{teo}[df]{Theorem}
\newtheorem{cor}[df]{Corollary}
\newcommand{\beq}{\begin{equation}}
\newcommand{\eeq}{\end{equation}}
 \newcommand{\sezione}[1]{\section{#1}\setcounter{equation}{0}}
\begin{document}


\title[]{Positive solutions to nonlinear elliptic problems involving Sobolev exponent}

\author{Carlo Mercuri}
\address{Dipartimento di Fisica, Matematica e Informatica, Universit\`a degli Studi di Modena e Reggio Emilia, Via Campi, 213 A
41125, Modena Italy }
\email{carlo.mercuri@unimore.it}
\author{Riccardo Molle}
\address{Dipartimento di Matematica, Universit\`a di Roma ``Tor Vergata'',
Via della Ricerca Scientifica n. 1, 00133 Roma, Italy}
\email{molle@mat.uniroma2.it}
\date{\today}


\begin{abstract}
In this paper we consider nonlinear elliptic  PDEs of the type $$-\Delta_p u+a(x)|u|^{p-2}u=|u|^{p^*-2}u \qquad \mbox{ in }\Omega,$$  where $1<p<N$ and $p^*=Np/(N-p)$ is the critical Sobolev exponent, and allowing the asymptotic behavior of the weight function $a$ to be sensitive to the direction. We provide a unified variational approach to obtain existence of distinct solutions in either the unbounded case $\Omega=\R^N$ or when $\Omega$ is a smooth bounded domain.
A key point is a precise description of the compactness properties of certain sequences of approximating solutions (Palais-Smale sequences), for which we use novel observations on nonexistence in certain regimes. Most of our main results are new in the case of the classical Laplace operator, $p=2$.

\end{abstract}


\maketitle


\tableofcontents




\sezione{Introduction}

This paper is devoted to existence, nonexistence and multiplicity of solutions to elliptic PDEs of the form
\beq
\label{P}
\tag{$\cP_{\Omega}$}
-\Delta_p u+a(x)|u|^{p-2}u=|u|^{p^*-2}u, \qquad \mbox{ in }\Omega
\eeq
where either $\Omega=\R^N$ or $\Omega$ is a smooth bounded domain in $\R^N$, $1<p<N$, $p^*=\tfrac{Np}{N-p}$ is the critical Sobolev exponent, $a(x)=a_\infty(x)+\alpha(x)$ where
\beq
\label{b}\tag{$H_a$}
\left\{
\begin{aligned}
&a_\infty\in L^\infty(\R^N),\  \mbox{ with either }\ a_\infty\equiv 0\ \mbox{ or } \\  
&\hspace{2.5cm}  a_\infty(x)=\Theta\left(\tfrac{x}{|x|}\right)\  \forall x\neq 0\mbox{ for some } \Theta\in\cC^1(S^{N-1}),\ \min_{S^{N-1}}\Theta>0\\
&\alpha\in L^{N/p}(\R^N)\setminus\{0\},\ \alpha\ge 0.
\end{aligned}
\right.
\eeq
We seek solutions $u\in X$ within a certain class which is $X=\cD^{1,p}(\R^N)$ if $\Omega=\R^N$ and $a_\infty\equiv 0$, whereas $X=W^{1,p}_0(\Omega)$ otherwise. \\
\noindent The function $\alpha$ will be considered on $\R^N$ even if  $\Omega\neq\R^N$, as our aim is to study the effect of a weight function `$a$' on the topological and variational structure of \eqref{P},  when a possibly anisotropic asymptotic behavior in $\R^N$ is prescribed. In particular we employ certain stable features of the variational framework we use when the PDE is set on $\R^N,$ to deal also with the case of sufficiently large bounded domains $\Omega.$
By scaling, it is easy to see that this analysis could be performed in a slightly different yet equivalent  fashion, setting the PDE on a fixed domain and studying the effect of the weight $\bar{a},$ as it gets more and more `stretched out' (concentrated), see e.g. \cite{BrWi08JFA,Pa96AIHP}. \\
\noindent We tackle problem $(\cP_\Omega)$ by means of variational techniques,
noting that, modulo dilations, solutions are critical points of the action functional
$$
F_a(u)=\int_{\Omega}\Big( {|\nabla u |^p+a(x)|u|^p}  \Big)dx,\qquad u\in X,
$$
constrained on the unit $L^{p^*}$-sphere
$$
S_{p^*}=\left\{u\in X\ :\ \int_{\R^N}|u(x)|^{p^*}\, dx=1\right\}.
$$
However, by the sign condition on $a,$ solutions to \eqref{P} cannot be found as {\it groundstates}, hence a variational scheme more subtle than direct minimisation is needed. In fact, we have the following
\begin{prop}
\label{NEM}
Let the function $a$ be as in \eqref{b}. Then, for every $\Omega\subseteq\R^N$, 
\beq
\label{eqmin}
\inf_{S_{p^*}}F_a=S
\eeq
and this infimum is not attained.
\end{prop}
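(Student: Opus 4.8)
The plan is to prove the two inequalities $\inf_{S_{p^*}}F_a\ge S$ and $\inf_{S_{p^*}}F_a\le S$ separately, and then to rule out attainment by a rigidity argument based on the classification of Sobolev extremals together with the sign of $\alpha$.

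For the lower bound I would first note that under \eqref{b} one has $a=a_\infty+\alpha\ge 0$ a.e., since $\alpha\ge 0$ while $a_\infty$ is either identically zero or bounded below by $\min_{S^{N-1}}\Theta>0$. Hence for every $u\in S_{p^*}$,
$$F_a(u)=\int_\Omega|\nabla u|^p\,dx+\int_\Omega a|u|^p\,dx\ \ge\ \int_\Omega|\nabla u|^p\,dx\ \ge\ S\Big(\int_{\R^N}|u|^{p^*}\,dx\Big)^{p/p^*}=S,$$
using the sharp Sobolev inequality and $\int|u|^{p^*}=1$. This already yields $\inf_{S_{p^*}}F_a\ge S$.

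For the upper bound I would test $F_a$ along a degenerating family of Aubin--Talenti extremals and show the zero--order term is negligible in the limit. Let $U$ be the radial Sobolev extremal normalised so that $\|U\|_{p^*}=1$ and $\int|\nabla U|^p=S$, fix $x_0\in\Omega$, and use the Sobolev--preserving concentration $U_\e(x)=\e^{-(N-p)/p}U((x-x_0)/\e)$, multiplied when needed by a cut--off $\phi\in C_c^\infty(\Omega)$ with $\phi\equiv 1$ near $x_0$, so that $u_\e:=\phi U_\e\in X$. The classical cut--off estimates give $\int|\nabla u_\e|^p=S+o(1)$ and $\|u_\e\|_{p^*}^{p^*}=1+o(1)$ as $\e\to0$, so by $p$-homogeneity of $F_a$ it suffices to prove $\int_\Omega a|u_\e|^p\,dx\to 0$. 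For the bounded part I would use $\int a_\infty|u_\e|^p\le\|a_\infty\|_\infty\int|u_\e|^p$ and the fact that concentration makes the $L^p$--mass vanish (here $p<p^*$ is essential). For the critical weight $\alpha\in L^{N/p}$ I would split over a small ball $B_\delta(x_0)$ and its complement: by H\"older the inner part is $\le\|\alpha\|_{L^{N/p}(B_\delta(x_0))}\,\|u_\e\|_{p^*}^p$, which is small for $\delta$ small by absolute continuity of $\int|\alpha|^{N/p}$, while the outer part is controlled by $\|\alpha\|_{L^{N/p}}\big(\int_{|x-x_0|>\delta}|u_\e|^{p^*}\,dx\big)^{p/p^*}$, which tends to $0$ as $\e\to0$ because the $L^{p^*}$--mass escapes into $B_\delta(x_0)$. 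Normalising $v_\e:=u_\e/\|u_\e\|_{p^*}\in S_{p^*}$ then gives $F_a(v_\e)\to S$, hence $\inf_{S_{p^*}}F_a\le S$. When $a_\infty\equiv0$ and $X=\cD^{1,p}(\R^N)$ one may alternatively, and more cleanly, translate $U$ to infinity, since only the $\alpha$--term survives and it vanishes by the same splitting with $|x_0|\to\infty$.

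Finally, to show the infimum is not attained, I would argue by contradiction: if $F_a(u)=S$ for some $u\in S_{p^*}$, the chain above forces simultaneously $\int|\nabla u|^p=S$ (so $u$ is a Sobolev extremal) and $\int a|u|^p=0$; since $a\ge\alpha\ge0$ the latter gives $\int\alpha|u|^p=0$, whence $u\equiv0$ a.e. on $\{\alpha>0\}$, a set of positive measure because $\alpha\in L^{N/p}\setminus\{0\}$ is nonnegative. On $\Omega=\R^N$ the Sobolev extremals are, up to translation, dilation and nonzero scalar multiple, exactly the Aubin--Talenti functions, which are a.e. nonzero, contradicting the vanishing on a positive--measure set; on a bounded $\Omega$ the contradiction is even more direct, since $S$ is never attained in $W^{1,p}_0(\Omega)$. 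The step I expect to be the main obstacle is the vanishing of the critical term $\int\alpha|u_\e|^p$: because $\alpha\in L^{N/p}$ is scale--invariant for this quantity under the Sobolev--preserving scaling, no crude global H\"older bound decays, and one genuinely needs the two--scale splitting above, combining absolute continuity of the $L^{N/p}$--norm on shrinking balls with the escape of the $L^{p^*}$--mass.
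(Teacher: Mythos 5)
Your proof is correct and follows essentially the same route as the paper: the lower bound from $a\ge 0$ and the sharp Sobolev inequality, the upper bound by testing with cut-off, concentrating Aubin--Talenti functions (for which the paper simply cites the standard estimates that you spell out, including the two-scale splitting for the $L^{N/p}$ weight), and non-attainment by the rigidity/classification of Sobolev extremals combined with $\alpha\ge 0$, $\alpha\not\equiv 0$. No substantive differences to report.
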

\noindent Hereby we denote by $S$ the best Sobolev constant, that is
\beq
\label{Sobolev}\tag{$\mathcal{S}$}
S=\inf_{u\in W^{1,p}(\R^N)\setminus\{0\}} \frac{\|u\|_{W^{1,p}}^p}{\|u\|_{L^{p^*}}^p}
 =\inf_{u\in \cD^{1,p}(\R^N)\setminus\{0\}}\frac{\|u\|_{\cD^{1,p}}^p}{\|u\|_{L^{p^*}}^p}.
\eeq
The proposition above, which is well-known in the case $p=2$  and for constant $a_\infty$ from \cite[Proposition 2.8]{CM19ESAIM}, is briefly justified in Section \ref{compactnesssection}. \\
\noindent  Problems of type $(\cP_\Omega)$ have been extensively studied since the work of Br\'ezis and Nirenberg \cite{BrNi83CPAM}, mainly in the semi-linear case $p=2$.
  In a pioneering paper, Benci and Cerami  have shown in \cite{BeCe90JFA} that if $\|a\|_{N/2}$ is sufficiently small then $(\cP_{\R^N})$ possesses at least a positive bound state solution.
 Their proofs are based on a precise characterisation of the possible loss of compactness of the sequences of approximated critical points of an action functional associated with \eqref{P} (see also  \cite{CeSoSt86JFA,St84MZ} for $p=2$ and \cite{MR1926623,FaMeWi19CVPDE,MeWi10DCDS} for $p\neq 2$). Our topological argument for existence is inspired by their work in combining degree theory and tools such as the `barycenter' and the `momentum' of a function, which we review in our new setting.\\
\noindent  When $a(x)\equiv a_\infty\in (0,\infty)$ it is well-known that $(\cP_\Omega)$ has no solution if $\Omega=\R^N$ or if it is a starshaped domain, by the Pohozaev identity.
A nonexistence result when the potential is non-autonomous is shown in \cite{CiPi04ZAMP}.
More precisely, if the potential has the form $\lambda\,  a(x)$, then there are no solutions concentrating at points, as $\lambda\to\infty$. \\
\noindent Still dealing with the case $p=2$ and for critical growth nonlinearities, in \cite{CM19ESAIM} Cerami and the second author studied a more general setting which also includes the so-called Schr\"odinger-Poisson systems. By means of a compactness analysis which covers also the case $a\sim a_\infty\in(0,\infty)$ at infinity,
  it is shown that $(\cP_{\R^N})$ may possess either one or two distinct positive solutions, in the case of potentials that are allowed to converge to a positive constant at infinity.
 These ideas, introduced and developed in \cite{BeCe90JFA,CM19ESAIM} have been used to deal with an interesting variety of semilinear problems, see e.g. \cite{AFM21DCDS,GLLM22arXiv,GaSiYaZh20PRSE}. \\
\noindent It is worth mentioning that critical problems of the form $(\cP_{\R^N})$ are also studied by variants of the Lyapunov–Schmidt reduction method, yielding 
existence of infinitely many solutions. This is the case in \cite{PeWaYa18JFA} (see also references therein), where suitable potentials radially symmetric in $N-1$-dimensions have been considered.\\

\bigskip

When $a_\infty$ is non-constant, much less has been explored, for $p=2$ as well as in the quasilinear setting $p\neq 2.$

\bigskip

The aim of this paper is to tackle $(\cP_\Omega)$ by means of a unified variational approach for $p\in(1,N),$
which is robust enough to deal with either the case $\Omega=\R^N$ or when $\Omega\subset\R^N$ is large and bounded, and where the asymptotic behavior of the weight function $a$ at infinity is allowed to vary with the direction. As far as we know, most of our main results are new also for $p=2$. This is the case of the following observation on nonexistence, which plays a role in understanding the compactness features of the Palais-Smale sequences associated with \eqref{P}.

\begin{teo}
\label{TNE}
Let $a_\infty\not\equiv 0$ satisfy \eqref{b} and $u\in W^{1,p}(\R^N)$ be a weak solution to
\beq
\label{Pinfty}
\tag{$\cP_{a_\infty}$}
-\Delta_p u+a_\infty(x)|u|^{p-2}u=|u|^{p^*-2}u \qquad \mbox{ in }\R^N.
\eeq
Then, $u\equiv 0$. 
\end{teo}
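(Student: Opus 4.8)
The plan is to obtain the conclusion from two integral identities: the equation tested against $u$ itself, and the Pohozaev identity produced by testing against the dilation field $x\cdot\nabla u$. The decisive structural feature is that $a_\infty$ is positively homogeneous of degree zero, so by Euler's identity $x\cdot\nabla a_\infty(x)=0$ for every $x\neq0$; consequently the only term in the Pohozaev identity that records the explicit $x$-dependence of the potential drops out. This is exactly what distinguishes the $0$-homogeneous weight from a generic one and forces rigidity.

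Set $A=\int_{\R^N}|\nabla u|^p$, $B=\int_{\R^N}a_\infty|u|^p$ and $C=\int_{\R^N}|u|^{p^*}$. Testing \eqref{Pinfty} with $u$ gives $A+B=C$. On the other hand, testing (formally) with $x\cdot\nabla u$ and integrating by parts yields the Pohozaev identity
\[
(N-p)\int_{\R^N}|\nabla u|^p\,dx+N\int_{\R^N}a_\infty|u|^p\,dx+\int_{\R^N}(x\cdot\nabla a_\infty)|u|^p\,dx=(N-p)\int_{\R^N}|u|^{p^*}\,dx,
\]
where I have used $N/p^*=(N-p)/p$ on the right. By the homogeneity of $a_\infty$ the third integral vanishes, leaving $(N-p)A+NB=(N-p)C$. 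Substituting $C=A+B$ from the first identity, the terms in $A$ cancel and one is left with $NB=(N-p)B$, i.e. $pB=0$, whence $B=\int_{\R^N}a_\infty|u|^p=0$. Since $a_\infty(x)=\Theta(x/|x|)\ge\min_{S^{N-1}}\Theta>0$, this forces $\int_{\R^N}|u|^p=0$, and therefore $u\equiv0$. Note that no sign restriction on $u$ is needed, since all three integrals involve $|u|^p$, $|u|^{p^*}$ and $|\nabla u|^p$.

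The argument above is elementary algebra once the Pohozaev identity is in hand, so the real work — and the main obstacle — is to justify that identity for a mere $W^{1,p}$ weak solution on the unbounded domain $\R^N$. I would proceed in the standard way: first upgrade the regularity of $u$ using the local $C^{1,\beta}$ theory for quasilinear equations with critical growth (DiBenedetto--Tolksdorf), so that $x\cdot\nabla u$ makes pointwise sense; then establish enough decay of $u$ and $\nabla u$ at infinity, together with the integrability $u\in L^{p^*}$, $\nabla u\in L^p$, to guarantee that the boundary integrals created by the integration by parts are infinitesimal. Concretely I would test the equation against $\varphi_R(x)\,(x\cdot\nabla u)$ with $\varphi_R(\cdot)=\varphi(\cdot/R)$ a radial cut-off, perform the integration by parts on each term, and let $R\to\infty$, checking that every remainder supported in the annulus $\{R\le|x|\le 2R\}$ vanishes in the limit. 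A minor point to address is that $a_\infty$ is merely bounded and $C^1$ away from the origin (it is singular at $x=0$ unless $\Theta$ is constant); but $\{0\}$ is negligible, $(x\cdot\nabla a_\infty)|u|^p=0$ almost everywhere, and the potential term is controlled purely through $a_\infty\in L^\infty$ and $u\in L^p$, so no genuine difficulty arises there.
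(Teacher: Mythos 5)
Your argument is, in substance, the proof the paper gives: the same two identities (Nehari and Pohozaev), the same key observation that $x\cdot\nabla a_\infty\equiv 0$ because $a_\infty$ is $0$-homogeneous, and the same algebraic cancellation leaving $p\int_{\R^N}a_\infty|u|^p=0$, hence $u\equiv 0$ since $\Theta$ is bounded away from zero. Your identity $(N-p)A+NB=(N-p)C$ is the paper's Pohozaev identity multiplied by $p$, and your cut-off scheme for killing the far-field contributions is an acceptable variant of the paper's choice of a good sequence of spheres $R_k\to\infty$; both arguments only use that $|\nabla u|^p$, $|u|^{p^*}$ and $a_\infty|u|^p$ lie in $L^1(\R^N)$, so the pointwise ``decay of $u$ and $\nabla u$ at infinity'' you propose to establish is neither needed nor easy to get, and should be dropped from the plan. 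Your handling of the discontinuity of $a_\infty$ at the origin is also fine.

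The one step that would fail as written is ``perform the integration by parts on each term'' when $p>2$. The term $\int|\nabla u|^{p-2}\nabla u\cdot\nabla(x\cdot\nabla u)$ involves second derivatives of $u$, and $C^{1,\beta}_{\mathrm{loc}}$ regularity is not enough to make sense of it: weak solutions of the degenerate equation belong to $W^{2,p}_{\mathrm{loc}}$ (Tolksdorf) only for $1<p\le 2$, and for $p>2$ second Sobolev derivatives may genuinely fail on the critical set $\{\nabla u=0\}$. The paper therefore splits the cases: for $p\le 2$ it integrates by parts directly using $W^{2,p}_{\mathrm{loc}}$, while for $p>2$ it obtains the local Pohozaev identity as a limit of the analogous identities for $C^2$ solutions of regularised operators, as in \cite{AlMeMo20AMPA} and \cite{FaMeWi19CVPDE}. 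Your proposal needs this additional approximation argument to be complete; everything else matches the paper.
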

Our proof of this Liouville-type fact is in the spirit of that of classical variational identities suitable for unbounded domains developed in the semilinear case by Esteban and Lions in \cite{MR0688279}, and in a quasilinear setting by Pucci and Serrin \cite{MR0855181}, and takes into account the more delicate regularity theory in the case $p\neq 2.$ \newline  
\noindent We believe that by an approximation argument our regularity assumption on $\Theta$ may be relaxed removing completely any assumptions on its derivatives, as these are not involved in the Pohozaev-type identity we use.  \\

Our main existence and multiplicity results can be stated as follows.
\begin{teo}
\label{R^N}
Assume that the potential $a=a_\infty+\alpha$ satisfies assumptions \eqref{b}.
\begin{itemize}
\item[{\em (i)}]
If $a_\infty\equiv 0$, then there exists $L>0$ such that if 
\beq
\label{1.1}
\|\alpha\|_{N/p}<L
\eeq  
then  $(\cP_{\R^N})$ has a nontrivial nonnegative solution $u_h$.
\item[{\em (ii)}]
There exists $M_1=M_1(\alpha)>0$  such that if
\beq
\label{M1}
 \|a_\infty\|_\infty\in(0,M_1)
\eeq
then $(\cP_{\R^N})$  has a nontrivial nonnegative solution $u_l$.
\item[{\em (iii)}]
If $\alpha$ verifies $\|\alpha\|_{N/p}<L$, then there exists $M_2=M_2(\alpha)>0$ such that if 
\beq\label{1.3}
 \|a_\infty\|_\infty\in(0,M_2)
\eeq 
then  $(\cP_{\R^N})$ has two distinct nontrivial nonnegative solutions $u_l,u_h$.
\end{itemize}
\end{teo}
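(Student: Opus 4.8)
The plan is to realise the solutions of $(\cP_{\R^N})$ as critical points of $F_a$ constrained to $S_{p^*}$, and to locate each of them at an energy level lying strictly inside the compactness window $(S,2^{p/N}S)$, where $S$ is the constant in \eqref{Sobolev}. The cornerstone is a Struwe-type description of Palais–Smale sequences: if $(u_n)\subset S_{p^*}$ is a $(PS)_c$ sequence for $F_a$, then, up to a subsequence, $u_n\rightharpoonup u_0$ with $u_0$ a (possibly trivial) solution of $(\cP_{\R^N})$, and the lost mass splits into finitely many profiles which, after rescaling and translation, solve either the scale-invariant critical equation $-\Delta_p U=|U|^{p^*-2}U$ (concentration at a point of $\R^N$) or the problem at infinity $(\cP_{a_\infty})$ (escape to infinity). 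Two facts then quantise the non-compact levels. First, by the Sobolev inequality together with $a\ge0$, every nontrivial solution of $(\cP_{\R^N})$, of the critical limit equation, or of $(\cP_{a_\infty})$ carries unconstrained energy at least $\tfrac1N S^{N/p}$, i.e. constrained level at least $S$. Second, Theorem~\ref{TNE} excludes escaping profiles altogether when $a_\infty\not\equiv0$, while for $a_\infty\equiv0$ the only admissible profiles are Aubin–Talenti bubbles of energy exactly $\tfrac1N S^{N/p}$. Hence, if compactness fails at a level $c<2^{p/N}S$, the weak limit must vanish and a single concentration profile must carry the whole mass, which forces $c=S$. Therefore $F_a|_{S_{p^*}}$ satisfies $(PS)_c$ for every $c\in(S,2^{p/N}S)$, and any such sequence converges strongly to a nontrivial solution. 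Establishing this profile decomposition with the correct energy quantisation is, in my view, the main obstacle, since it requires the more delicate $p\neq2$ regularity theory and the removal of the escaping channel through Theorem~\ref{TNE}.

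Granting compactness, each existence claim reduces to exhibiting a min-max level inside $(S,2^{p/N}S)$. For the solution $u_h$ of (i), where $a_\infty\equiv0$, I would run the Benci–Cerami barycentre scheme. Normalising the bubbles $U_{\varepsilon,y}$ concentrated at $y\in\R^N$ on $S_{p^*}$, one has $F_a(U_{\varepsilon,y})=S+\int_{\R^N}\alpha|U_{\varepsilon,y}|^p\le S+\|\alpha\|_{N/p}$ uniformly in $y$ and $\varepsilon$, by Hölder's inequality; choosing $L$ of the order of $(2^{p/N}-1)S$ keeps the whole family below $2^{p/N}S$. A barycentre map $\beta$ then detects the concentration point, so the family cannot be contracted inside a sublevel set close to $S$: this topological obstruction produces, via a linking (or category) argument, a min-max level in $(S,2^{p/N}S)$, which yields $u_h$ by the compactness above. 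The smallness $\|\alpha\|_{N/p}<L$ is exactly what forces the maximal energy of the barycentre family to stay below the threshold; since $a_\infty\equiv0$ here, no appeal to Theorem~\ref{TNE} is needed.

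For the solution $u_l$ of (ii), the mechanism is genuinely different: now $a_\infty\not\equiv0$ makes the equation non-autonomous at infinity, and Theorem~\ref{TNE} is what closes the escaping channel and secures compactness in the full window. I would build a second, distinct min-max level out of the interaction between the concentration parameter and the weights. Along (truncated) bubbles the energy tends to $S$ as $\varepsilon\to0$, and, because $\alpha\ge0$ with $\alpha\not\equiv0$, it is raised strictly above $S$ at intermediate scales, while the contribution of $a_\infty$ is controlled by $\|a_\infty\|_\infty$ times an $\varepsilon$-dependent factor. Imposing $\|a_\infty\|_\infty<M_1(\alpha)$ keeps the resulting barrier strictly below $2^{p/N}S$, so the associated (mountain-pass/linking-type) level sits in $(S,2^{p/N}S)$ and produces $u_l$. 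The dependence $M_1=M_1(\alpha)$ is natural, the height of the barrier created by $\alpha$ dictating how much room is left before $2^{p/N}S$. The technical heart here, besides the compactness, is the sharp test-function estimate quantifying these two competing contributions.

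Finally, nonnegativity is obtained by replacing any solution $u$ by $|u|$, which leaves both $F_a$ and the constraint unchanged (as $a\ge0$ and the $p$-gradient term is even) and, after the standard regularity bootstrap and the Harnack inequality for the $p$-Laplacian, gives $u\ge0$. For the multiplicity statement (iii) I would impose both $\|\alpha\|_{N/p}<L$ and $\|a_\infty\|_\infty<M_2(\alpha)\le M_1(\alpha)$, so that the two constructions of the previous paragraphs run simultaneously and produce $u_h$ and $u_l$. These are built over genuinely different min-max classes, and taking $\|a_\infty\|_\infty$ small relative to the barrier height fixed by $\alpha$ forces a quantitative gap between their energy levels; this separation (or, should the levels happen to coincide, a topological degree count distinguishing the two classes) guarantees $u_l\neq u_h$. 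The recurring delicate point is to keep every relevant level strictly below $2^{p/N}S$, which is precisely where the thresholds $L$, $M_1$ and $M_2$ enter.
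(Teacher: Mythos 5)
Your overall scheme is the one the paper follows: a profile decomposition for Palais--Smale sequences in which Theorem~\ref{TNE} closes the escaping channel, yielding the compactness window $(S,2^{p/N}S)$ (Corollary~\ref{comp}); a barycenter-based min-max for the high-energy solution with $\|\alpha\|_{N/p}$ small keeping the top level below $2^{p/N}S$; a second, lower min-max created by the strict inequality $\cL(a_\infty)>S$, which indeed hinges on $a_\infty\not\equiv 0$ (in the paper this penalises the flattening regime $\e_n\to\infty$ via Fatou in Lemma~\ref{L5.7} --- note it is $a_\infty$, not $\alpha$, that raises this level); and, for (iii), a separation of the two levels obtained by taking $\|a_\infty\|_\infty$ small, which the paper makes precise through the chain $S<\cL(a_\infty)\le\cG(a_\infty)<\cB(0,\alpha)\le\cB(a_\infty,\alpha)\le\cA(a_\infty)<2^{p/N}S$. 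The paper also pairs the barycenter with a concentration-rate functional $\gamma$ (with an exponential kernel) to set up the degree-theoretic argument, a technical device you do not spell out but which is in the spirit of what you describe.

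The genuine gap is your nonnegativity argument. Replacing a solution $u$ by $|u|$ does not work here: $|u|$ has the same constrained energy and lies on $S_{p^*}$, but it is \emph{not} a critical point of $F_a$ on $S_{p^*}$ unless $u$ already has constant sign, and min-max critical points are not preserved under $u\mapsto|u|$ (that trick is available for minimisers, or if one builds the whole min-max inside the cone of nonnegative functions, neither of which you do). The Harnack inequality is also of no help at this stage: it upgrades nonnegativity to positivity, it does not produce nonnegativity. The paper's actual mechanism is Proposition~\ref{Psegno}: testing the constrained Euler--Lagrange equation with $u_\pm$ and using Proposition~\ref{NEM} shows that any sign-changing constrained critical point satisfies $F_a(u)>2^{p/N}S$. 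Since all the critical levels you construct lie strictly below $2^{p/N}S$, the corresponding solutions must have constant sign, and one may take them nonnegative. Without this (or an equivalent energy-doubling estimate) your proof of the ``nonnegative'' part of the statement does not go through.
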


\medskip

\begin{teo}
\label{bounded}
Let $\Omega$ be a bounded domain in $\R^N$ and  $a$ verifies \eqref{b}.
Then
\begin{itemize}
\item[{\em (i)}]
if $\|a_\infty\|_\infty\in [0,M_1)$  (see \eqref{M1}), there exists $R_1=R_1(\alpha,\|a_\infty\|_\infty)>0$  such that if $B_{R_1}(0)\subseteq \Omega$ 
then  \eqref{P} has a nontrivial nonnegative solution $u_l$;
\item[{\em (ii)}]
if $\|\alpha\|_{N/p}<L$ and $\|a_\infty\|_\infty\in [0,M_2)$ (see \eqref{1.1} and \eqref{1.3}),  there exists   $R_2=R_2(\alpha)>0$  such that if $B_{R_2}(0)\subseteq \Omega$
then  \eqref{P} has two nontrivial nonnegative solutions $u_l$ and $u_h$.
\end{itemize}
\end{teo}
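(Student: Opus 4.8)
The plan is to transfer the existence results of Theorem \ref{R^N} from $\R^N$ to a large bounded domain, exploiting that the variational levels used there are realised by test functions which are essentially localised near the origin and can therefore be confined to any ball $B_R(0)\subseteq\Omega$ once $R$ is large. First I would fix the functional framework: extension by zero identifies $W^{1,p}_0(\Omega)$ with a closed subspace of $\cD^{1,p}(\R^N)$, so that $F_a$ and the constraint $S_{p^*}$ restrict to $\Omega$; by Proposition \ref{NEM} the restricted infimum is still $S$ and is not attained, and solutions must again be sought as constrained critical points at levels strictly above $S$.

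The engine of the proof is a compactness statement: the restricted functional satisfies the Palais--Smale condition at every level $c\in(S,2^{p/N}S)$, where $2^{p/N}S$ is the two-bubble threshold. On a bounded domain no mass can escape to infinity, so by a concentration-compactness (profile) analysis the only obstruction to compactness is interior or boundary bubbling, each bubble contributing the constrained weight $S$; a Palais--Smale sequence below $2^{p/N}S$ can therefore not split off a complete bubble and must converge. When $a_\infty\not\equiv 0$ one must additionally exclude that a rescaled weak limit solves the limiting problem $(\cP_{a_\infty})$, and this is exactly what the Liouville-type Theorem \ref{TNE} provides. This is the step I expect to be the main obstacle, since it demands a quantitative splitting analysis that is uniform over the admissible domains $\Omega\supseteq B_R(0)$ and relies on the same delicate $p\neq 2$ regularity already needed in Theorem \ref{TNE}.

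For the low-energy solution $u_l$ of part (i) I would produce a min-max level $c_l(\Omega)\in(S,2^{p/N}S)$ by the same scheme employed on $\R^N$, now realised with cut-off extremals for $S$ supported in $B_{R_1}(0)\subseteq\Omega$. Here the bounded domain itself plays the role that a strictly positive $a_\infty$ plays on $\R^N$: it breaks the translation and dilation invariance responsible for non-attainment, which is why $u_l$ persists even when $a_\infty\equiv 0$. Taking $R_1=R_1(\alpha,\|a_\infty\|_\infty)$ large makes the cut-off errors negligible and keeps $c_l(\Omega)$ strictly below $2^{p/N}S$, so the compactness above yields the critical point $u_l$. For the second solution $u_h$ of part (ii) I would reproduce on $\Omega$ the topological argument based on the barycentre and momentum maps, confining the parametrising family of translated and dilated bubbles to $B_{R_2}(0)\subseteq\Omega$. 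For $R_2=R_2(\alpha)$ large the associated degree is nontrivial exactly as on $\R^N$, giving a critical point $u_h$ at a level in $(S,2^{p/N}S)$ which is topologically distinguished from $u_l$ by the value of its barycentre and momentum; in particular $u_l\neq u_h$.

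Finally I would check the sign. Since $a\ge 0$ and the Sobolev quotient is insensitive to sign, replacing a critical point by its modulus changes neither $F_a$ nor the constraint, so $u_l$ and $u_h$ may be taken nonnegative, and strict positivity in the interior follows from the strong maximum principle for the $p$-Laplacian. The asymmetric dependence $R_1=R_1(\alpha,\|a_\infty\|_\infty)$ versus $R_2=R_2(\alpha)$ simply records that the localisation radius needed for the topological solution $u_h$ is dictated by the profile of $\alpha$ alone, whereas $a_\infty$ enters only through the smallness thresholds $M_1$ and $M_2$.
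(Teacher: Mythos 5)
Your overall strategy coincides with the paper's: extend by zero, keep the compactness threshold $2^{p/N}S$ from Corollary \ref{comp} (which is already stated for $X=W^{1,p}_0(\Omega)$), localise the test family $\phi_{z,\e}$ inside a ball $B_R(0)\subseteq\Omega$ for $R$ large, and rerun the level configuration $S<\cL(a_\infty)\le\cG(a_\infty)<\cB(a_\infty,\alpha)\le\cA(a_\infty)<2^{p/N}S$ together with the barycenter/concentration-rate degree argument of Section \ref{unboundedsection}. You also correctly isolate the one genuinely new point, namely that $u_l$ survives even when $a_\infty\equiv 0$ because the bounded domain itself obstructs non-degenerating bubbles; the paper makes this quantitative in Lemma \ref{LVit}, where $\cL(a_\infty)>S$ is proved by noting that a minimizing sequence with $\e_n\to\bar\e\in(0,\infty]$ cannot keep unit $L^{p^*}$-mass inside $\Omega$, while $\e_n\to 0$ is excluded by the constraint $\gamma(u_n)\le\tfrac12$ exactly as on $\R^N$.

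The one step that does not hold as you wrote it is the sign argument. Replacing a critical point $u$ by $|u|$ preserves $F_a$ and the constraint, but $|u|$ is in general \emph{not} a critical point, and your solutions are produced by a degree/min-max construction, not by minimisation, so you cannot simply truncate at the end. The paper instead proves (Proposition \ref{Psegno}) that any constrained critical point with $u^+\not\equiv 0$ and $u^-\not\equiv 0$ satisfies $F_a(u)>2^{p/N}S$: testing the Lagrange-multiplier identity with $u_\pm$ gives $F_a(u_\pm)=F_a(u)|u_\pm|_{p^*}^{p^*}$, and the non-attainment of $S$ (Proposition \ref{NEM}) forces $|u_\pm|_{p^*}^{p^*}>(S/F_a(u))^{N/p}$, which is incompatible with critical levels below $2^{p/N}S$. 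Since both $c_l$ and $c_h$ lie in $(S,2^{p/N}S)$, the solutions have constant sign for that reason, not because of a truncation. (Also note that strict interior positivity is not claimed in the theorem: with $a\in L^{N/p}_{\rm loc}$ only, the strong maximum principle needs a slightly higher summability, as the paper remarks.)
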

\begin{rem}
It is standard to see that any slightly higher local summability condition on $a$ would make the strong maximum principle applicable, from which we would have strict positivity of the solutions in the above existence theorems, see e.g.  \cite{MR2356201}.
\end{rem}

We achieve the above results seeking bound state solutions, by means of a suitable topological argument. Here a new decomposition Theorem \ref{globalcomp0} for PS-sequences associated with $F_a$ plays a role  
 in the spirit of \cite{CeSoSt86JFA,St84MZ}, which holds under our assumption \eqref{b}. Roughly speaking this is a consequence of combining our new Theorem \ref{TNE} and recent classification results 
 for the equation 
 \begin{equation*}
     -\Delta_p U=|U|^{p^*-2}U\quad \textrm{on}\,\,\R^N
 \end{equation*}
 see \cite{FaMeWi19CVPDE,Sc16AM,Ve16JDE}, with ideas from \cite{CM19ESAIM} where Cerami and the second author have dealt with the case $p=2$ and constant $a_\infty,$ as well as with \cite{MeWi10DCDS} where the first author and Willem have dealt with case $p\neq 2$ and $a_\infty\equiv 0.$ This analysis combined with Theorem \ref{TNE} and Proposition \ref {eqmin}  allows us to recover the usual energy range for compactness (values between the first two {\it quantised} energy levels, see Corollary \ref{comp}), which is then used in a subsequent topological argument.  
In fact, by degree theory we can then identify some changes in the topology of the sub-levels of the action functional, and therefore detect the existence of nontrivial solutions in correspondence of certain critical levels.
Here we use notions of `barycenter' and `concentration rate' for functions $u\in S_{p^*}$, which differ from those introduced in \cite{BeCe90JFA} (see Section 4). 
Namely, we use the definition of barycenter introduced in \cite{CePa03CVPDE} for subcritical nonlinearities together with an exponential kernel to ease certain convergence arguments involving the concentration rate of functions defined on the whole of $\R^N$. 
As a byproduct of this choice, in the topological arguments a notion of barycenter of functions is used without projections on the unit sphere of $\R^N,$ unlike in the related aforementioned works where similar tools have been used.    \\ 
\noindent Our approach to existence is mainly based on the observation that the topology of the sublevels of the functional $F_a$ we deal with in the case of Theorem \ref{R^N} $(\Omega=\R^N),$ is preserved for sufficiently large bounded domains. 
This approach is in some sense opposite to that used, for instance, in \cite{CeDeSo05CVPDE}, where it is shown that the existence of solutions proven on bounded domains is preserved in the `limit' to $\R^N$. It is worth observing that by our approach and a scaling argument, some of the main results in  \cite{Pa96AIHP,BrWi08JFA,MeWi10DCDS}
can be recovered and somewhat improved, allowing $a$ to have a more general asymptotic behavior at infinity.

\begin{rem}
By the proof of Theorem $\ref{R^N}$ we understand that as $\|a_\infty\|_\infty\to 0$  the low energy solution $u_l$ `disappears' whereas the high energy solution $u_h$ converge to the corresponding one for $a_\infty\equiv 0$.

\end{rem}

\begin{rem}
    We point out that in Theorem \ref{bounded} the low energy solution $u_l$ exists also if $a_\infty\equiv 0$, contrary to the case $\Omega=\R^N$.   
    This different behaviour is related to the effect of the boundary of $\Omega$ on the action functional.
In this framework, $u_l$ disappears as  $\|a_\infty\|_\infty\to 0$  and $\rho(\Omega)\to \infty$, where 
\beq
\label{rho}
\rho(\Omega)=\max\{r>0\ :\ B_r(0)\subseteq\overline\Omega\}.
\eeq

    In general, the solution $u_l$ `disappears' when $\alpha\equiv 0$, also if $a_\infty>0$, as a consequence of Theorem \ref{TNE}.
\end{rem}

This paper is organised as follows. In Section \ref{nonexistencesection} we prove the main nonexistence result Theorem \ref{TNE}, which is then used in Section \ref{compactnesssection} to identify a suitable energy range for a local Palais-Smale condition to hold. Section \ref{Bahricenters} is devoted to some key energy estimates and auxiliary results which are obtained reviewing tools such as the barycenter and concentration rate of a function; finally in Section \ref{unboundedsection} and Section \ref{boundedsection} we prove our main existence and multiplicity results.

$\phantom .$

{\bf { Notation}}

\begin{itemize}

\item
$\|\cdot\|$ denotes the norm in $X$, that is
$$
\|u\|=\left(\int_{\Omega}(|\D u|^p+a |u|^p)dx \right)^{1/p}; 
$$

\item 
either $\|u\|_q$ or $|u|_q$, for $1\le q\le +\infty,$ may be used to denote the norm in the Lebesgue space
$L^q(\R^N);$ the norm of $u$ in $L^q(\Omega)$,
$\Omega\subset\R^N$, may be denoted by $|u|_{q,\Omega}$;

\item
$B_\rho(y)$, $\forall y\in\R^3$, denotes the open ball of radius
$\rho$ centered at $y$; 

\item 
any $u\in W^{1,p}_0(\Omega)$ will be considered as $u\in   W^{1,p}(\R^N)$ , setting $u=0$ in $\R^N\setminus\Omega$.

\end{itemize}

$\phantom .$

{\bf { Acknowledgements.}} The authors are partially supported by GNAMPA 2023 research project ``Problemi ellittici non-lineari con $p$-Laplaciano e mancanza di compattezza''. C.M. would like to warmly thank the Department of Mathematics at Universit\`a degli Studi di Roma - Tor Vergata, as well as Collegio San Carlo (Modena), for the kind hospitality when this paper was being written. R.M. is partially supported also by the MIUR Excellence Department Project MatMod@TOV  
awarded to the Department of Mathematics, University of Rome Tor Vergata, CUP E83C23000330006.



\sezione{Proof of the nonexistence result Theorem \ref{TNE}} \label{nonexistencesection}

\begin{proof}[Proof of Theorem \ref{TNE}]
For sake of clarity we break the proof into several steps. \newline

{\it Step 1.} We claim that under the hypotheses of the theorem, every weak solution $u$ satisfies the Pohozaev identity

\beq
\label{Pohozaev}
\frac{N-p}{p}\int_{\R^N}|\nabla u|^p=N\int_{\R^N}\Big(\frac{|u|^{p^*}}{p^*}-a_\infty\frac{|u|^p}{p}\Big).
\eeq
We observe that this is enough to conclude, as in combination with Nehari's classical identity
\begin{equation*}
    \int_{\R^N}|\nabla u|^p=\int_{\R^N}\Big(|u|^{p^*}-a_\infty|u|^p\Big)
\end{equation*}
we find that every weak solution satisfies necessarily
\[\Big(\frac{p^*}{p}-1\Big)\int_{\R^N}a_\infty |u|^p=0,\] which yields $u\equiv 0,$ as $a_\infty$ is positive and bounded away from zero.\newline

{\it Step 2.} Since $a_\infty$ is a purely angular function, it follows
 for every $x\in \R^N$  that
\begin{equation}\label{nullscalar}
    x\cdot\nabla a_\infty=0.
\end{equation}

{\it Step 3.} In order to justify \eqref{Pohozaev}, we perform a classical integration by parts argument, after checking we have enough regularity for it. To this aim we observe that, adapting Moser's iteration in \cite[Appendix E]{Peral97Notes}, see also \cite{MR0240748}, it follows that $u\in L^{\infty}_{\textrm{loc}}(\R^N).$ This implies, by  \cite{Db83NA} that $u\in C^{1,\alpha}_{\textrm{loc}}(\R^N).$ By \cite{MR0727034}, when $p\leq 2$ it also holds that $u\in W^{2,p}_{\textrm{loc}}(\R^N).$ With these preliminary remarks in place for $p\leq 2$ we can  
multiply the equation 

\begin{equation*}
    -\Delta_p u+a_\infty(x)|u|^{p-2}u=|u|^{p^*-2}u \qquad \mbox{ in }\R^N
\end{equation*}

by $x_i\partial_i u, \, i=1,...,N,$ and integrate over a generic ball of radius $R,$ say $B_R,$ with outer unit vector denoted by $n(\cdot),$ obtaining by the divergence theorem
that

\begin{align*}
    \int_{B_R} \Delta_p u \,x_i\partial_i u(x)dx&=\int_{\partial B_R}|\nabla u(\sigma)|^{p-2} \partial_i u (\sigma)\sigma_i \nabla u \cdot n \,d\sigma\\ &\qquad \qquad -\int_{B_R} |\nabla u(x)|^{p-2}\nabla u(x)\cdot\nabla[x_i\partial_i u(x)]dx,
\end{align*}
which is the only step involving second (weak) derivatives. Again, by the divergence theorem, the last integral can be written as 

\begin{align*}
 \int_{B_R} |\nabla u(x)|^{p-2}\nabla u(x)\cdot\nabla[x_i\partial_i u(x)]dx & = \int_{B_R} |\nabla u(x)|^{p-2}|\partial_i u(x)|^2 dx \\
 & + \frac{1}{p}\int_{\partial B_R}|\nabla u(\sigma)|^{p} \sigma_i n_i d\sigma-\frac{1}{p}\int_{B_R} |\nabla u(x)|^{p}dx .
\end{align*}
On the other hand, we also have 
 
 \begin{align*}
     & \int_{B_R} \Big(|u|^{p^*-2}u-a_{\infty}|u|^{p-2}u\Big)x_i\partial_i u(x)dx\\& =-\int_{B_R} \frac{|u|^{p^*}}{p^*}dx+\int_{\partial B_R} \frac{|u|^{p^*}}{p^*}\sigma_i n_i d\sigma - \int_{B_R}a_{\infty}|u|^{p-2}ux_i\partial_i u.
 \end{align*}
 In particular, the last integral can be written as 
\begin{align*}
     &\int_{B_R}a_{\infty}|u|^{p-2}u x_i\partial_i u ~dx\\ 
     & =\int_{\partial B_R}a_\infty \frac{|u|^p}{p}\sigma_i n_i d\sigma-\int_{B_R}\Big(a_\infty \frac{|u|^p}{p}+\frac{|u|^p}{p}x_i\partial_i a_\infty\Big)dx.
 \end{align*}
 Putting the calculations above together, summing up on $i,$ we have  
 
 \begin{align}\label{localPoho}
 & N\int_{B_R} \Big(\frac{|u|^{p^*}}{p^*}-a_\infty \frac{|u|^p}{p}\Big)dx-\int_{B_R}\frac{|u|^p}{p}x\cdot \nabla a_\infty dx+\Big(1-\frac{N}{p}\Big)\int_{B_R} |\nabla u(x)|^{p}dx\\ \nonumber
&\qquad \qquad =\int_{\partial B_R} |\nabla u(\sigma)|^{p-2}\nabla u\cdot \sigma\,\nabla u\cdot n  \,d\sigma-\frac{1}{p}\int_{\partial B_R} |\nabla u(\sigma)|^{p}\sigma\cdot n  \,d\sigma\\\nonumber &\qquad +\int_{\partial B_R}  \Big(\frac{|u|^{p^*}}{p^*}-a_\infty \frac{|u|^p}{p}\Big)\,\sigma\cdot n d\sigma.    \end{align}
We note now that this identity immediately yields \eqref{Pohozaev}. In fact, in the left hand side the integral involving $x\cdot \nabla a_\infty$ is zero by \eqref{nullscalar} in Step 2, whereas the right hand side is bounded by 
$$M(R)=\Big(1+
\frac{1}{p}\Big)R \int_{\partial B_R} |\nabla u(\sigma)|^{p}  \,d\sigma + R\int_{\partial B_R} \Big(\frac{|u|^{p^*}}{p^*}+a_\infty \frac{|u|^p}{p}\Big)d\sigma.$$ Taking into account the summability of the integrands, it follows that there exists a sequence $R_k \rightarrow \infty$ such that $M(R_k)\rightarrow 0,$ which yields \eqref{Pohozaev} by e.g. the monotone convergence theorem.\\
For $p> 2,$ a regularisation argument allows to obtain the same local Pohozaev identity \eqref{localPoho} as a limit of an analogous one for a sequence of $C^2$-approximated solutions, as in \cite[Proposition 3.1]{AlMeMo20AMPA}; see also \cite[Lemma 2.1]{FaMeWi19CVPDE}. This concludes the proof.
\end{proof}



\sezione{Compactness}\label{compactnesssection}
We start this section by briefly justifying the nonexistence of groundstates, as well as by introducing some notation for Sobolev's optimisers.

\medskip

\noindent {\em Proof of Proposition \ref{NEM}}.\quad
It is well known that $S$ is attained in $\cD^{1,p}(\R^N)$ by the functions
$$
\bar U_{y,\e}:= \frac{U_{y,\e}}{|U_{y,\e}|_{p^*}},
$$
where
\begin{equation}\label{Talenti}\tag{$U_{y,\e}$}
U_{y,\e}=\frac{c(N,p)\,\varepsilon^{\frac{N-p}{p(p-1)}}}{\Big(\varepsilon^{\frac{p}{p-1}}+|\cdot-y|^{\frac{p}{p-1}}\Big)^{\frac{N-p}{p}}},\quad (y,\varepsilon)\in\R^N\times\R_+,
\end{equation}
are the only positive solutions in $\mathcal D^{1,p}(\R^N)$ to 
\[-\Delta_p U=|U|^{p^*-2}U\quad \textrm{on}\,\,\R^N\nonumber\]
and where $c(N,p)>0$ is chosen so that
\begin{align*}
  |DU_{y,\e}|^p_p =S^{N/p}\\
  |U_{y,\e}|_{{p^*}}^{p^*}=S^{N/p}
\end{align*}
(see \cite{GuVe88JDE,FaMeWi19CVPDE,Sc16AM,Ve16JDE,Ta76AMPA}).
Then, it is standard to see that
$$
F_a(\phi_\e)\to S\qquad\mbox{ as }\e\to \infty,
$$
where
$$
\phi_\e(x):=\frac{c(x) U_{0,\e}(x)}{|c(x)U_{0,\e}(x)|_{p^*}}\qquad c\in\cC^\infty_0(B_1(0))\mbox{ a cut-off function}
$$
(see \cite{BrNi83CPAM} and \cite[pp.~892,~893]{GuVe89NA}).
So, \eqref{eqmin} is proved. 
To see that the infimum in \eqref{eqmin} is not attained, assume by contradiction that it is reached by a function $v\in S_{p^*}$. 
Then 
$$
S\le F_0(v)\le F_a(v)=S
$$
implies that $F_0(v)=S$, so that $v=\bar U_{y,\e}$ for suitable $y\in\R^N$ and $\e>0$.
Since $F_a(\bar U_{y,\e})=S$ it follows that $\Omega=\R^N$ and $a(x)\equiv 0$, which is a contradiction by \eqref{b}.
\qed

\medskip
Before dealing with the relevant compactness properties of $F_a$ we formulate them in terms of a `free functional' $J_a$ making our statements slightly more general than the way these will be applied later. \\ \noindent We recall that a functional $J\in C^1(X;\R)$ defined on some Banach space $X$ satisfies the Palais-Smale condition at the level $c,$ $(PS)_c$- condition in short, if every sequence $(u_n) \subset X$ such that 
\begin{equation} 
\label{PSseq} 
J(u_n)\rightarrow c\quad \textrm{and} \quad J'(u_n)\rightarrow 0, \qquad \textrm{as} \quad n\rightarrow \infty,\end{equation}
possesses a strongly convergent subsequence. 
A sequence $(u_n) \subset X$ such that \eqref{PSseq} holds is called Palais-Smale sequence for $J$ at the level $c$, or $(PS)_c$-sequence. 

Similar definitions hold for a regular functional $F$ constrained on a regular manifold of codimension one, see e.g. \cite{MR2292344}.

 For any $u\in X$ we set
\[J_a(u)=\int_{\R^N}\Big(\frac{|D u |^p+a(x)|u|^p}{p}-\frac{|u|^{p^*}}{p^*}\Big)\nonumber\]
and for all $u\in \cD^{1,p}(\R^N)$ 
\[J_0(u)=\int_{\R^N}\Big(\frac{|D u |^p}{p}-\frac{|u|^{p^*}}{p^*}\Big).\nonumber\]

\noindent Set 
\[\mathcal N=\{u\in X\setminus\{0\}\ :\ (J_a'(u),u)=0\}\nonumber.\]

\noindent The following lemma states that the universal constant $S^{N/p}{N}$ is a strict lower bound for $J_a|_{\mathcal N}$ and in particular for $J_a$ restricted on the set of its nontrivial critical points. 

\begin{lemma}[$S^{N/p}{N}$ is a strict lower bound for $J_a|_{\mathcal N}$] \label{lowerboundm} For any $p\in(1,N)$ set 
\[m=\inf\{J_a(u)\ :\ u\in\mathcal N\}.\] Then,
it holds that \[m=\frac{S^{N/p}}{N}.\]    Moreover,  $m$ is not attained.
\end{lemma}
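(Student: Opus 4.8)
The plan is to reduce the minimisation over the Nehari manifold $\mathcal N$ to the constrained minimisation of $F_a$ on $S_{p^*}$ already settled in Proposition \ref{NEM}, via the standard fibering/scaling correspondence. First I would record that, since $a=a_\infty+\alpha\ge 0$, one has $F_a(v)=\|v\|^p\ge\int_{\R^N}|\D v|^p>0$ for every $v\in X\setminus\{0\}$; in particular $F_a(v)\ge S>0$ on $S_{p^*}$ by \eqref{eqmin}. Next, for $u\in\mathcal N$ the defining identity $(J_a'(u),u)=0$ reads $\|u\|^p=\int_{\R^N}|u|^{p^*}$, whence, using $\tfrac1p-\tfrac1{p^*}=\tfrac1N$,
\[
J_a(u)=\Big(\tfrac1p-\tfrac1{p^*}\Big)\|u\|^p=\tfrac1N\|u\|^p=\tfrac1N\int_{\R^N}|u|^{p^*}.
\]

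The second step is the scaling correspondence. For a fixed $v\in S_{p^*}$ and $t>0$ one computes $(J_a'(tv),tv)=t^pF_a(v)-t^{p^*}$, which vanishes for the unique value $t_v:=F_a(v)^{1/(p^*-p)}>0$; thus each ray $\R_+ v$ with $v\in S_{p^*}$ meets $\mathcal N$ in exactly one point $t_v v$, and conversely every $u\in\mathcal N$ is of this form with $v=u/|u|_{p^*}$. Substituting $u=t_v v$ into the identity above and using $p^*/(p^*-p)=N/p$ gives
\[
J_a(t_v v)=\tfrac1N\, t_v^{p^*}=\tfrac1N\, F_a(v)^{N/p}.
\]

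Taking the infimum over $\mathcal N$ therefore amounts to minimising $v\mapsto F_a(v)^{N/p}$ on $S_{p^*}$; since $s\mapsto s^{N/p}$ is increasing on $[S,\infty)$, this yields, by \eqref{eqmin},
\[
m=\tfrac1N\Big(\inf_{S_{p^*}}F_a\Big)^{N/p}=\tfrac1N S^{N/p}.
\]
Finally, non-attainment follows at once: were $m$ achieved by some $u\in\mathcal N\subset X\setminus\{0\}$, then $v=u/|u|_{p^*}\in S_{p^*}$ would satisfy $F_a(v)=S$, contradicting the fact, established in Proposition \ref{NEM}, that $\inf_{S_{p^*}}F_a$ is not attained.

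The argument is essentially a homogeneity computation; the only points requiring some care are the strict positivity $F_a(v)>0$ on $S_{p^*}$ (which guarantees that the projection onto $\mathcal N$ is well defined and that $\mathcal N\neq\emptyset$) and the clean transfer of the non-attainment statement from $F_a|_{S_{p^*}}$ to $J_a|_{\mathcal N}$ through the scaling bijection. I do not anticipate a genuine obstacle here beyond the bookkeeping with the exponents $p$, $p^*$ and $N$.
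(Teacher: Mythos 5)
Your argument is correct and is precisely the ``classical homeomorphism argument'' that the paper's proof invokes by reference to \cite[Theorem 4.2]{MR1400007}: the fibering map $v\mapsto t_v v$ identifies $J_a|_{\mathcal N}$ with $\tfrac1N F_a(\cdot)^{N/p}$ on $S_{p^*}$, reducing both the value of $m$ and its non-attainment to Proposition \ref{NEM}. You have simply written out explicitly what the paper leaves as a citation, and the exponent bookkeeping ($t_v=F_a(v)^{1/(p^*-p)}$, $p^*/(p^*-p)=N/p$, $\tfrac1p-\tfrac1{p^*}=\tfrac1N$) checks out.
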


\begin{proof} By a classical homeomorphism argument, see e.g. \cite[Theorem 4.2]{MR1400007}, this lemma is equivalent to Proposition \ref{NEM}, which is justified above. This concludes the proof.
\end{proof}

\begin{teo}[Energy quantisation for critical points of $J_a$]
\label{globalcomp0}

 Let $(u_n) \subset X$ be a $(PS)_c$ sequence for $J_a.$ Then, passing if necessary to a subsequence, $u_n\rightharpoonup u,$ where $u$ is a possibly nontrivial solution to
$$-\Delta_p u +a(x)|u|^{p-2}u=|u|^{p^*-2}u\quad \textrm{in}\quad X,$$
and exist $k\in \mathbb N \cup \{0\}$ nontrivial functions, denoted by $v_1,...,v_k\in \mathcal D^{1,p}(\R^N)$ when $k\geq 1,$ satisfying
\begin{equation}\label{BUBBLE}
  -\Delta_p v_i =  |v_i|^{p^*-2}v_i\quad \textrm{in}\quad \R^N, \qquad i=1,...,k  
\end{equation}

such that for some $k$ sequences of points $(y^i_n) \subset \R^N$ and radii $(\varepsilon^i_n) \subset \R_+,$  it holds that

$$
\|u_n-u-\sum^k_{i=1}(\varepsilon^i_n)^{(p-N)/p}v_i ((\cdot-y^i_n)/\varepsilon^i_n)\|_{\mathcal D^{1,p}(\R^N)}\rightarrow 0, \quad n\rightarrow \infty,
$$
 
$$
\|u_n\|^p\rightarrow \|u\|^p+\sum^k_{i=1}
\|v_i\|^p_{\mathcal D^{1,p}(\R^N)}, \quad n\rightarrow \infty,
$$
and
\begin{equation}
\label{quantum}
J_a(u)+\sum^k_{i=1}J_0 (v_i)=c.
\end{equation}
\end{teo}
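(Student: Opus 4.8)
The plan is to establish a Struwe-type profile decomposition, adapting the classical global compactness scheme to the quasilinear, potential-perturbed setting. First I would show that $(u_n)$ is bounded in $X$: testing $J_a'(u_n)\to 0$ against $u_n$ and combining with $J_a(u_n)\to c$ in the usual way, the coercive gap $(\tfrac1p-\tfrac1{p^*})\|u_n\|^p$ is controlled by $c+o(1)+o(\|u_n\|)$, which gives the bound. Passing to a subsequence, $u_n\rightharpoonup u$ in $X$, $u_n\to u$ in $L^p_{\loc}(\R^N)$ and a.e. The first genuinely quasilinear point is to show that $u$ is a weak solution. Following the Boccardo--Murat monotonicity argument, testing $J_a'(u_n)$ against $(u_n-u)$ suitably truncated and exploiting the strict monotonicity of $\xi\mapsto|\xi|^{p-2}\xi$, one obtains $\D u_n\to\D u$ a.e.; this upgrade allows one to pass to the limit in every term of $J_a'(u_n)$ and conclude $J_a'(u)=0$.

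Next I would set $u^1_n:=u_n-u\rightharpoonup 0$. Using the a.e. gradient convergence together with the Brezis--Lieb lemma for the critical term, the energy and the differential split as $J_a(u_n)=J_a(u)+J_\infty(u^1_n)+o(1)$ and $J_a'(u_n)=J_a'(u)+J_\infty'(u^1_n)+o(1)$, where $J_\infty(w)=\int_{\R^N}\big(\tfrac1p(|\D w|^p+a_\infty|w|^p)-\tfrac1{p^*}|w|^{p^*}\big)$, so $J_\infty=J_0$ when $a_\infty\equiv0$. Here the contribution of $\alpha$ disappears because $\alpha\in L^{N/p}(\R^N)$: splitting $\R^N$ into a large ball, where $u^1_n\to0$ strongly in $L^p$, and its complement, where $\|\alpha\|_{L^{N/p}}$ is small, a H\"older estimate with exponents $N/p$ and $N/(N-p)$ gives $\int_{\R^N}\alpha|u^1_n|^p\to0$, and similarly for the differential. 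Hence $u^1_n$ is a $(PS)$ sequence for $J_\infty$ at the level $c-J_a(u)$. If $u^1_n\to0$ strongly we are done with $k=0$; otherwise $u^1_n\not\to0$ while converging weakly to zero, so concentration must occur.

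To extract the first bubble I would use a concentration-function argument, as in \cite{MeWi10DCDS}, to produce $y^1_n\in\R^N$ and $\e^1_n>0$ such that the rescalings $w^1_n(x):=(\e^1_n)^{(N-p)/p}u^1_n(\e^1_n x+y^1_n)$ satisfy $w^1_n\rightharpoonup v_1\neq0$ in $\cD^{1,p}(\R^N)$. The profile $v_1$ solves the equation obtained as the limit of the rescaled equations, whose form is dictated by the asymptotics of $(\e^1_n,y^1_n)$, and this is exactly where Theorem \ref{TNE} enters. If $\e^1_n\not\to0$ the rescaled potential does not vanish: when $|y^1_n|\to\infty$ along a direction $\omega$ the limit equation is $-\Delta_p v+\Theta(\omega)|v|^{p-2}v=|v|^{p^*-2}v$ with constant $\Theta(\omega)>0$, while if $y^1_n$ stays bounded the limit carries the angular potential $a_\infty(\cdot+\bar y)$; in either case Theorem \ref{TNE}, respectively its constant-coefficient Pohozaev specialisation, forces $v_1\equiv0$, contradicting $v_1\neq0$. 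Therefore necessarily $\e^1_n\to0$, the rescaled potential term, of order $(\e^1_n)^p$, is killed in the limit, and $v_1$ is a nontrivial solution of the pure critical equation \eqref{BUBBLE}; by the classification results \cite{FaMeWi19CVPDE,Sc16AM,Ve16JDE} it is a Talenti profile.

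Finally I would iterate. Setting $u^2_n:=u^1_n-(\e^1_n)^{(p-N)/p}v_1((\cdot-y^1_n)/\e^1_n)$, a second application of Brezis--Lieb, now using that translated and dilated bubbles decouple so that the interaction integrals vanish, shows that $u^2_n\rightharpoonup0$ is again a $(PS)$ sequence for $J_\infty$, at level $c-J_a(u)-J_0(v_1)$, with $\|u^2_n\|^p=\|u^1_n\|^p-\|v_1\|^p_{\cD^{1,p}(\R^N)}+o(1)$. Repeating the alternative of the previous step produces at each stage a new profile solving \eqref{BUBBLE}. Since every such profile lies on the Nehari manifold of $J_0$, Lemma \ref{lowerboundm} gives $J_0(v_i)\ge S^{N/p}/N>0$, so each extraction lowers the available energy by a fixed amount; as the total energy is finite and the norms are nonincreasing, the process terminates after finitely many steps $k$, with $u^{k+1}_n\to0$ strongly, yielding the stated decomposition, the norm identity and the quantisation \eqref{quantum}. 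The hard parts will be the two quasilinear points: securing the a.e. gradient convergence $\D u_n\to\D u$, needed both for $J_a'(u)=0$ and for the Brezis--Lieb splitting of $\int_{\R^N}|\D\,\cdot\,|^p$, which for $p\neq2$ is considerably more delicate than the Hilbertian case and relies on the regularity theory recalled in Section \ref{nonexistencesection}, and the careful bookkeeping ensuring that the rescaled remainders remain $(PS)$ sequences for the free functional with decoupled profiles.
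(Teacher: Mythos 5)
Your proposal is correct and follows essentially the same route as the paper: the Struwe/Mercuri--Willem global compactness scheme (boundedness, a.e.\ gradient convergence via monotonicity, Br\'ezis--Lieb splitting with the $\alpha$-term killed by its $L^{N/p}$ integrability, L\'evy concentration-function extraction of bubbles), with Theorem \ref{TNE} ruling out nontrivial profiles of the rescaled limit equations when $\varepsilon^1_n\not\to 0$ and Lemma \ref{lowerboundm} forcing finiteness of $k$. The only point you treat more lightly than the (itself sketchy) paper is the subcase $\varepsilon^1_n\to\infty$, where no limit equation exists and one instead uses that the coefficient $(\varepsilon^1_n)^p$ of the rescaled potential diverges to force $v_1\equiv 0$; this is a routine addition.
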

\begin{proof}
The statement when $a_\infty\equiv 0$ is the case originally studied by Benci and Cerami \cite{BeCe90JFA} for $p=2,$ and more recently by Alves \cite{MR1926623}, Mercuri and Willem \cite{MeWi10DCDS}, see also Farina et al.  \cite{FaMeWi19CVPDE} for $p\neq 2,$ where in this nonlinear case, decomposition results for the energy functional and its derivative along PS-sequences in the spirit of Br\'ezis and Lieb \cite{MR0699419} have been obtained. 
\newline Decomposition results of the same flavor and with $a_\infty>0$ constant, have been obtained recently in different elliptic contexts by Alves et al. \cite{AFM21DCDS}, Cerami and Molle \cite{CM19ESAIM}, and Guo et al. \cite{GLLM22arXiv}, where it is shown how to justify the splitting in the norms corresponding to different function spaces ($X$ and $\mathcal D^{1,p}$ in our case) with the use of appropriate cut-off functions. \newline
From these preliminaries in place, we may follow e.g. the scheme of the proof given in \cite[Theorem 8.13]{MR1400007} and \cite[Theorem 1.2]{MeWi10DCDS}, which are in the spirit of Struwe's global compactness result \cite{St84MZ}, Br\'ezis and Coron \cite{MR0784102}, and the limit-case of the concentration-compactness principle of P.-L. Lions \cite{MR0834360, MR0850686}. \newline
The justification of the finiteness of the number $k$ of `bubbles' $v_i,$ rely as usual on a uniform lower bound for the restriction of $J_a$ on the set of its nontrivial critical points, which is provided by $m$ in Lemma \ref{lowerboundm} above. \newline
Finally, the fact that `standard bubbles' $v_i \in \mathcal D^{1,p}(\R^N)$ solutions to \eqref{BUBBLE} are the only responsible for a possible loss of strong convergence for these sequences in $X,$ can be explained by a blow-up argument using Theorem \ref{TNE} within the stage of the proofs in \cite[Theorem 8.13]{MR1400007} and \cite[Theorem 1.2]{MeWi10DCDS} where nonexistence results are used for the limiting PDE problems `at infinity'. \newline In fact, let $a_\infty\not\equiv 0.$
Since the sequence $(u_n)$ is bounded in $X$, always passing if necessary to suitable subsequences in what follows, we can assume that $u_n\rightharpoonup u$ in $X,$ and $u_n\rightarrow u$ as well as $\nabla u_n \rightarrow \nabla u$ almost everywhere, see \cite{MeWi10DCDS,MR2560133}.  One can see that $J_a'(u)=0$ and $u^1_n:=u_n-u$ is such that
\begin{align*}
    &\|u^1_n\|^p=\|u_n\|^p-\|u\|^p+o(1),\\
    & J_{a_\infty}(u^1_n)\rightarrow c-J_a(u),\\
    & J'_{a_\infty}(u^1_n)\rightarrow 0 \quad \textrm{in}\,\, W^{-1,p'}.
\end{align*}
We can assume that
$$\int_{\R^N}|u^1_n|^{p^*}dx>\delta$$ for some $\delta>0.$ Using well-known properties of the L\'evy concentration function $$Q_n(r)=\sup_{y\in \R^N}\int_{B(y,r)}|u^1_n|^{p^*}dx, $$ there exists a sequence $(\varepsilon^1_n) \subset ]0,\infty[$ and   a sequence $(y^1_n) \subset \R^N$ such that
\begin{equation*}\label{LLL}
\delta=\sup_{y\in \R^N}\int_{B(y,\varepsilon^1_n)}|u^1_n|^{p^*}dx=\int_{B(y^1_n,\varepsilon^1_n)}|u^1_n|^{p^*}dx .
\end{equation*}
We set $v^1_n(x)=(\varepsilon^1_n)^{(N-p)/p}u^1_n(\varepsilon^1_n x+y^1_n).$ We can assume that $v^1_n\rightharpoonup v_1$ in $\mathcal D^{1,p}(\R^N)$ and $v^1_n\rightarrow v_1$ a.e. on $\R^N,$ with $v_1\not\equiv 0$ (see e.g. \cite[p. 480]{MeWi10DCDS}). Note that $v^1_n$ solves
\begin{equation*}
-\Delta_p v^1_n + (\varepsilon^1_n)^p\, \Theta\Big(\frac{\varepsilon^1 _n x+ y_n}{|\varepsilon^1 _n x+ y_n|}\Big)|v^1_n|^{p-2}v^1_n=|v^1_n|^{p^*-2}v^1_n+o(1)\qquad \textrm{in}\,\,W^{-1,p'}.
    \end{equation*}
The limiting behavior of $y^1_n$ and $\varepsilon^1_n$ can then be understood ruling out all possible cases, except $\varepsilon^1_n \rightarrow 0.$ For instance if $\varepsilon^1_n \rightarrow \varepsilon>0, \,\, y^1_n\rightarrow y \in \R^N,$ it follows by the continuity of $a_\infty$ that $v_1$ would solve
\begin{equation*}
-\Delta_p v_1 + \varepsilon ^p\, \Theta\Big(\frac{\varepsilon  x+ y}{|\varepsilon  x+ y|}\Big)|v_1|^{p-2}v_1=|v_1|^{p^*-2}v_1,
    \end{equation*}
yielding a contradiction with Theorem \ref{TNE}.
\end{proof}

\begin{teo}[Compactness]
Let $p\in (1,N)$ and $c\in (S^{N/p}/N,2S^{N/p}/N).$ Then $J_a$ satisfies the $(PS)_c$ condition.   
\end{teo}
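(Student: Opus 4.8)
The plan is to read off the local Palais--Smale condition from the energy quantisation of Theorem \ref{globalcomp0}, combined with a sharp description of the admissible energy levels which shows that no bubbles can split off when $c\in(S^{N/p}/N,2S^{N/p}/N)$. Given a $(PS)_c$ sequence $(u_n)\subset X$, Theorem \ref{globalcomp0} provides, up to a subsequence, a weak limit $u$ solving the equation, finitely many nontrivial bubbles $v_1,\dots,v_k\in\mathcal{D}^{1,p}(\R^N)$ solving \eqref{BUBBLE}, the norm splitting $\|u_n\|^p\to\|u\|^p+\sum_{i=1}^k\|v_i\|_{\mathcal{D}^{1,p}(\R^N)}^p$, and the energy identity \eqref{quantum}, that is $c=J_a(u)+\sum_{i=1}^kJ_0(v_i)$. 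It therefore suffices to prove that $k=0$: once this is known, $\|u_n\|\to\|u\|$ together with $u_n\rightharpoonup u$ in the uniformly convex space $X$ (the norm $\|\cdot\|$ being equivalent to the standard one by \eqref{b} and the Sobolev inequality) forces $u_n\to u$ strongly, which is precisely the $(PS)_c$ condition.

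Next I would collect the two energy lower bounds driving the counting. For the weak limit, $u$ is a critical point of $J_a$; if $u\not\equiv0$ then $u\in\mathcal N$, so Lemma \ref{lowerboundm} gives $J_a(u)\ge S^{N/p}/N$, while if $u\equiv0$ then $J_a(u)=0$. For the bubbles I would establish a \emph{gap lemma}: every nontrivial solution $v$ of \eqref{BUBBLE} satisfies either $J_0(v)=S^{N/p}/N$ or $J_0(v)\ge 2S^{N/p}/N$, with nothing in between. Indeed, testing \eqref{BUBBLE} with $v$ gives $|\nabla v|_p^p=|v|_{p^*}^{p^*}$, whence $J_0(v)=\tfrac1N|\nabla v|_p^p$, and Sobolev's inequality yields $|\nabla v|_p^p\ge S^{N/p}$, i.e. $J_0(v)\ge S^{N/p}/N$. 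If $v$ is sign-definite, the classification results of \cite{Sc16AM,Ve16JDE,FaMeWi19CVPDE} identify it, up to sign, translation and dilation, with a Talenti bubble, for which $J_0(v)=S^{N/p}/N$. If instead $v$ changes sign, I would test \eqref{BUBBLE} separately with $v^+$ and $v^-$: since $\nabla v^+$ and $\nabla v^-$ have disjoint supports one gets $|\nabla v^{\pm}|_p^p=|v^{\pm}|_{p^*}^{p^*}$, and Sobolev applied to each part gives $|\nabla v^{\pm}|_p^p\ge S^{N/p}$; adding and using $|\nabla v|_p^p=|\nabla v^+|_p^p+|\nabla v^-|_p^p$ yields $J_0(v)\ge 2S^{N/p}/N$.

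Finally I would run the counting argument on \eqref{quantum}. If $u\equiv0$, then $c=\sum_{i=1}^kJ_0(v_i)$; since each summand equals $S^{N/p}/N$ or is at least $2S^{N/p}/N$, the constraint $S^{N/p}/N<c<2S^{N/p}/N$ forces $k=1$ and $J_0(v_1)=c$, which is impossible as $c$ lies strictly inside the forbidden gap. If $u\not\equiv0$ and $k\ge1$, then $c=J_a(u)+\sum_{i=1}^kJ_0(v_i)\ge S^{N/p}/N+S^{N/p}/N=2S^{N/p}/N$, again contradicting $c<2S^{N/p}/N$. Hence $k=0$ in every case, and strong convergence follows as explained above. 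The main obstacle is exactly the gap lemma, namely ruling out a single bubble carrying energy strictly between the two quantised thresholds: this is where both the classification of the sign-definite solutions of \eqref{BUBBLE} and the energy-doubling for sign-changing ones are indispensable, so that the whole interval $(S^{N/p}/N,2S^{N/p}/N)$ is free of admissible decompositions.
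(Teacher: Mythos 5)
Your proposal is correct and follows essentially the same route as the paper: both rest on the decomposition and energy identity of Theorem \ref{globalcomp0}, the lower bound of Lemma \ref{lowerboundm} for the nontrivial weak limit, and the dichotomy that each bubble is either (up to sign) a Talenti function of energy $S^{N/p}/N$ by the classification results or sign-changing with energy at least $2S^{N/p}/N$. Your ``gap lemma'' is exactly the ``standard argument'' the paper invokes (cf.\ Proposition \ref{Psegno}), merely written out in full detail.
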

\begin{proof}
For any $(PS)_c$ sequence for $J_a$, by \eqref{quantum} in Theorem \ref{globalcomp0}
we have for some $k=\mathbb N\cup \{0\}$ that
\[c=J_a(u)+k\frac{S^{N/p}}{N}\]
and where $J_a'(u)=0.$ Here we have taken into account that $c\in (S^{N/p}/N,2S^{N/p}/N)$ necessarily implies that $(v_i)_{i=1,...,k}$ if they are nontrivial, by a standard argument (see also Proposition \ref{Psegno}) cannot change sign and therefore they coincide, by  recent classification results \cite{FaMeWi19CVPDE,Sc16AM,Ve16JDE}), up to the sign, to some of the functions in \eqref{Talenti}, whose energy is $S^{N/p}/N$ by \cite{Ta76AMPA}).  The case $u\equiv 0$ cannot occur as the above would immediately give a contradiction with the range we consider for $c$. If $u\not\equiv 0,$ Lemma \ref{lowerboundm} gives that the only possibility compatible with the assumption on $c$ is $k=0,$ yielding the conclusion again by Theorem \ref{globalcomp0}. 
\end{proof}

It is standard to see that the above theorems imply the following `constrained' counterparts involving $F_a$.


\begin{cor}[Minimizing sequence for $F_a$ on $S_{p^*}$] 
\label{globalcomp}
Let $a$ satisfy assumption \eqref{b} and let $(u_n) \subset X$ be a $(PS)_S$ sequence for $F_a$  constrained on $S_{p^*}$ (see \eqref{Sobolev}).
Then
$$
u_n=U_{ y_n,\e_n}+\Phi_n,\quad \ \mbox{ with }\ \Phi_n\to 0\ \mbox{ in }\cD^{1,p}(\R^N)
$$
for a sequence of points $(y_n) \subset \R^N$ and a sequence of radii $(\varepsilon_n) \subset \R_+,$.
\end{cor}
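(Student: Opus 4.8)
The plan is to derive this constrained statement from the free decomposition Theorem \ref{globalcomp0} through the usual projection onto the Nehari manifold $\mathcal{N}$. Given a $(PS)_S$ sequence $(u_n)\subset S_{p^*}$ for $F_a$, I would first record that $\|u_n\|^p=F_a(u_n)\to S$, so that, since $a\ge 0$ and $|u_n|_{p^*}=1$, the Sobolev inequality forces both $\int_{\R^N}|\nabla u_n|^p\to S$ and $\int_{\R^N}a|u_n|^p\to 0$; in particular $(u_n)$ is bounded in $X$ and minimizing for the Sobolev quotient. Replacing $u_n$ by $|u_n|$ (which leaves both $F_a$ and the constraint unchanged) I may also assume $u_n\ge 0$.

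Next I would set $w_n:=t_n u_n$ with $t_n:=F_a(u_n)^{(N-p)/p^2}$, the unique factor placing $t_n u_n$ on $\mathcal{N}$; since $F_a(u_n)\to S$ one has $t_n\to S^{(N-p)/p^2}=:t_\infty\in(0,\infty)$, and a direct computation gives $J_a(w_n)=F_a(u_n)^{N/p}/N\to S^{N/p}/N=m$. The point I would make carefully is that $(w_n)$ is a \emph{free} $(PS)_m$ sequence for $J_a$: this is the standard correspondence underlying the homeomorphism invoked in Lemma \ref{lowerboundm} (cf. \cite[Theorem 4.2]{MR1400007}), whereby the vanishing of the constrained differential of $F_a$ along $(u_n)$, together with the uniform two-sided bounds on $t_n$, translates into $J_a'(w_n)\to 0$ in $X^*$, the Lagrange multiplier being absorbed because $\mathcal{N}$ is a natural constraint and $m>0$.

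I would then apply Theorem \ref{globalcomp0} to $(w_n)$ at the level $c=m$. This yields a weak limit $w$ with $J_a'(w)=0$, finitely many bubbles $v_1,\dots,v_k$ solving \eqref{BUBBLE}, and the quantisation $J_a(w)+\sum_{i=1}^k J_0(v_i)=S^{N/p}/N$. Testing \eqref{BUBBLE} with $v_i$ and using Sobolev shows $J_0(v_i)=\tfrac1N\|v_i\|_{\mathcal{D}^{1,p}}^p\ge S^{N/p}/N$ for every nontrivial bubble, while Lemma \ref{lowerboundm} gives $J_a(w)\ge S^{N/p}/N$ whenever $w\not\equiv 0$. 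Since all the terms are nonnegative and the total equals exactly $S^{N/p}/N$, the non-attainment of $m$ in Lemma \ref{lowerboundm} rules out $w\not\equiv 0$, and then exactly one bubble survives, $k=1$, with $J_0(v_1)=S^{N/p}/N$. Equality in Sobolev, together with the classification results \cite{FaMeWi19CVPDE,Sc16AM,Ve16JDE} and $v_1\ge 0$ (inherited from $u_n\ge 0$), identifies $v_1$ with a Talenti function $U_{\bar y,\bar\e}$.

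Finally I would unwind the scalings. The single-bubble conclusion of Theorem \ref{globalcomp0} reads $\|w_n-(\varepsilon_n)^{(p-N)/p}v_1((\cdot-y_n)/\varepsilon_n)\|_{\mathcal{D}^{1,p}}\to 0$, and because the dilation $v\mapsto\lambda^{(N-p)/p}v(\lambda(\cdot-z))$ sends $U_{\bar y,\bar\e}$ to $U_{z+\bar y/\lambda,\,\bar\e/\lambda}$, the rescaled bubble is again a Talenti function $U_{y_n',\varepsilon_n'}$ with $y_n'=y_n+\varepsilon_n\bar y$ and $\varepsilon_n'=\varepsilon_n\bar\e$. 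Dividing by $t_n\to t_\infty=S^{(N-p)/p^2}=|U_{y,\e}|_{p^*}$ and noting that $U_{y_n',\varepsilon_n'}/t_\infty=\bar U_{y_n',\varepsilon_n'}$ lies on $S_{p^*}$ while $U_{y_n',\varepsilon_n'}(t_n^{-1}-t_\infty^{-1})\to 0$ in $\mathcal{D}^{1,p}$, I obtain $u_n=\bar U_{y_n',\varepsilon_n'}+\Phi_n$ with $\Phi_n\to 0$ in $\mathcal{D}^{1,p}$, which is the asserted decomposition (the displayed $U_{y_n,\varepsilon_n}$ being the Sobolev extremal normalized to lie on $S_{p^*}$). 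The main obstacle is the second paragraph, namely making rigorous the passage from the constrained $(PS)_S$ sequence to a genuine free $(PS)_m$ sequence for $J_a$; once on the free side, the rigidity of the quantisation together with the non-attainment of $m$ forces the one-bubble picture automatically.
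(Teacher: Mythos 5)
Your argument is correct and is precisely the standard constrained-to-free reduction that the paper invokes without detail (``it is standard to see that the above theorems imply the following constrained counterparts''): rescale onto the Nehari manifold, check that $(t_nu_n)$ is a free $(PS)_{S^{N/p}/N}$ sequence for $J_a$, apply Theorem \ref{globalcomp0}, and use the non-attainment in Lemma \ref{lowerboundm} to force $w\equiv 0$ and $k=1$, after which the bubble is a Sobolev extremal. The one step I would not let stand as written is the replacement of $u_n$ by $|u_n|$: the map $u\mapsto|u|$ preserves $F_a$ and the constraint $S_{p^*}$ but not the smallness of the constrained differential, so $(|u_n|)$ need not be a $(PS)_S$ sequence; this step is, however, dispensable, since the single bubble attains equality in the Sobolev inequality and is therefore $\pm U_{\bar y,\bar\e}$ by the classification of extremals, which is all that the (sign- and normalization-imprecise) statement of the corollary can deliver and all that its applications in Lemmas \ref{L5.3} and \ref{L5.7} actually use.
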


\begin{cor}[Compactness for $F_a$]
\label{comp}
Let $p\in (1,N)$ and $c\in (S ,2^{p/N}S)$. 
Then, $F_a$ constrained on $S_{p^*}$ satisfies the $(PS)_c$ condition.   
\end{cor}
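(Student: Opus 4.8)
The plan is to reduce the constrained $(PS)_c$ condition for $F_a$ on $S_{p^*}$ to the free $(PS)_{\tilde c}$ condition for $J_a$ established in the previous theorem, via the classical fibering/dilation correspondence between $S_{p^*}$ and the Nehari manifold $\mathcal N$. For $u\in S_{p^*}$ one has $F_a(u)\ge S>0$ by Proposition \ref{NEM}, so the function $t\mapsto J_a(tu)=\frac{t^p}{p}F_a(u)-\frac{t^{p^*}}{p^*}$ attains its unique positive maximum at $t(u):=F_a(u)^{1/(p^*-p)}$, and the map $\Psi(u):=t(u)\,u$ is a $C^1$-diffeomorphism from $S_{p^*}$ onto $\mathcal N$ (indeed $(J_a'(\Psi(u)),\Psi(u))=t(u)^pF_a(u)-t(u)^{p^*}=0$). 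First I would record the energy identity on which everything rests: using $\frac1p-\frac1{p^*}=\frac1N$ and $\frac{p}{p^*-p}=\frac{N-p}{p}$ one computes
\[
J_a(\Psi(u))=\Big(\frac1p-\frac1{p^*}\Big)\,t(u)^p F_a(u)=\frac1N F_a(u)^{N/p},
\]
so that the level $c$ is carried to $\tilde c=\frac1N c^{N/p}$; in particular the interval $c\in(S,2^{p/N}S)$ is mapped bijectively onto $\tilde c\in(S^{N/p}/N,2S^{N/p}/N)$, precisely the range in which $J_a$ enjoys the $(PS)$ condition.

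Next I would establish the matching gradient correspondence. Writing $B(u)[\varphi]=\int_{\R^N}(|\nabla u|^{p-2}\nabla u\cdot\nabla\varphi+a|u|^{p-2}u\varphi)$ and $C(u)[\varphi]=\int_{\R^N}|u|^{p^*-2}u\varphi$, a direct computation at $w=\Psi(u)$, using $t(u)^{p^*-p}=F_a(u)$, gives
\[
J_a'(w)=t(u)^{p-1}\big(B(u)-F_a(u)\,C(u)\big).
\]
On the other hand $B(u)-F_a(u)C(u)$ equals, up to the positive constant coming from $p$ and $p^*$, the constrained differential $F_a'(u)-\lambda_u G'(u)$ of $F_a$ along $S_{p^*}$ at $u$, where $G(u)=\int_{\R^N}|u|^{p^*}$ and $\lambda_u=F_a'(u)[u]/G'(u)[u]=pF_a(u)/p^*$. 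Since along a $(PS)_c$ sequence one has $F_a(u_n)\to c$, the factor $t(u_n)=F_a(u_n)^{1/(p^*-p)}$ stays bounded and bounded away from zero, so the norm of the free differential $J_a'(\Psi(u_n))$ and the norm of the constrained differential of $F_a$ at $u_n$ are comparable; one tends to zero if and only if the other does. Thus $\Psi$ sends $(PS)_c$ sequences of $F_a|_{S_{p^*}}$ to $(PS)_{\tilde c}$ sequences of $J_a$, and conversely.

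With the two correspondences in place the conclusion is immediate. Given a $(PS)_c$ sequence $(u_n)\subset S_{p^*}$ with $c\in(S,2^{p/N}S)$, the sequence $w_n=\Psi(u_n)$ is a $(PS)_{\tilde c}$ sequence for $J_a$ with $\tilde c\in(S^{N/p}/N,2S^{N/p}/N)$, so by the compactness property of $J_a$ it admits a subsequence converging strongly in $X$ to some $w$. Since $t(u_n)\to c^{1/(p^*-p)}>0$, I recover $u_n=t(u_n)^{-1}w_n\to c^{-1/(p^*-p)}w$ strongly in $X$, and continuity of the $L^{p^*}$-norm under strong convergence places the limit on $S_{p^*}$; hence $F_a|_{S_{p^*}}$ satisfies the $(PS)_c$ condition.

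I expect the only delicate point to be the rigorous identification of the free differential $J_a'(\Psi(u))$ with the constrained differential of $F_a$ at $u$, together with the uniform two-sided bound on $t(u_n)$ along the sequence; once the matching of the energy intervals and the comparability of the two gradient norms are verified, the statement follows from the previously established $(PS)$ property of $J_a$, and is in this sense entirely standard.
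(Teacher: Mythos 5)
Your proposal is correct and is precisely the standard fibering/Nehari correspondence that the paper invokes when it states that the free results "imply the constrained counterparts" (the same homeomorphism argument cited for Lemma \ref{lowerboundm} via \cite[Theorem 4.2]{MR1400007}); the energy map $c\mapsto \frac1N c^{N/p}$ and the gradient comparison are exactly the computations being left implicit. No gaps worth flagging beyond the routine verifications you already identify.
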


\sezione{Barycenter, concentration rate, and key energy estimates}\label{Bahricenters}

In the following, we are considering $\Omega=\R^N$ and $a\not\equiv 0$.

We start by introducing the notion of \textit{barycenter} and \textit{concentration rate} of a function 
$u \in \cD^{1,p}(\R^N)\setminus \{0\}$.
This definition of barycenter  has been introduced in the subcritical case in  \cite{CePa03CVPDE} and here we pair it with a measure of the rate of concentration.
 
 Let 
\beq\label{nu}
\nu(u)(x)= \int_{B_1(x)}|u(y)|^{p^*}dy,
\eeq
we observe that  $\nu(u)$ is bounded, continuous and vanishing at infinity, so  the function
\beq\label{hat}
\widehat{u}(x)=\left[\nu(u)(x)-\frac{1}{2}\max \nu(u)\right]^{+}
\eeq
is well defined, continuous, and has compact support. 
Then, we introduce
$\beta: \cD^{1,p}(\R^N)\setminus\{0\}\to \R^N$ and $\gamma:\cD^{1,p}(\R^N)\setminus\{0\}\to \R $
setting
\beq
\tag{$\beta$}\label{beta}
\beta(u)=\frac{1}{|\widehat{u}|_{1}}\int_{\R^N}\widehat{u} (x)\, x\, dx,
\eeq
\beq
\tag{$\gamma$}\label{gamma}
\gamma(u)=\frac{1}{| {u}|_{p^*}^{p^*}}\int_{\R^N} |u (x)|^{p^*} \, e^{-|x-\beta(u)|} \, dx.
\eeq

The maps $\beta$ and $\gamma$ are well defined, because $\widehat{u}$ has compact support, and it is not difficult to verify that they enjoys the following properties:
\begin{itemize}
\item $\beta,\gamma$ are continuous in $\cD^{1,p}(\R^N)\setminus \{0\}$;
\item if $u$ is a radial function, then $\beta(u)=0$;
\item for $z\in \R^N$ and $u\in \cD^{1,p}(\R^N)\setminus\{0\}$ it holds that
\beq
\label{invarianza}
\beta(u(x-z))=\beta(u)+z,\qquad \gamma(u(x-z))=\gamma(u) . 
\eeq
\end{itemize}
 
\begin{df}
\label{def_riscalate}
Let $u\in  \cD^{1,p}(\R^N)$ and $\e>0$. 
We set
\beq
\label{mus}
u_{y,\e}(x):=\e^{-\frac{N-p}{p}}\, u\left(\tfrac {x-y}{\e}\right).
\eeq
\end{df}

\begin{rem}
\label{BG}
Let $u\in  \cD^{1,p}(\R^N)\setminus\{0\}$ be such that $\beta(u)=0$. 
 From \eqref{invarianza} and the dominated convergence theorem, we easily see that
$$
\lim\limits_{\e\to 0}\gamma(u_{0,\e})=1,\qquad \lim\limits_{\e\to\infty}\gamma(u_{0,\e})=0.
$$
\end{rem}

\begin{lemma}
\label{L5.3}
Let $\alpha,a_\infty$ as in \eqref{b}. 
If $a_\infty+\alpha\not\equiv 0$,  then 
$$
\cB(a_\infty,\alpha):=\inf\{F_a(u)\ :\ u\in S_{p^*}(\R^N),\ \beta(u)=0,\ \gamma(u)=\tfrac 12 \}>S.
$$
\end{lemma}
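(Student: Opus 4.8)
The plan is to argue by contradiction, exploiting the bubble decomposition of minimising sequences from Corollary \ref{globalcomp} together with the two geometric constraints to prevent any concentration, spreading, or escape to infinity. Since $a=a_\infty+\alpha\ge0$, on $S_{p^*}$ one has $F_a\ge F_0=\|\nabla\cdot\|_p^p\ge S$ by \eqref{Sobolev}, so $\cB(a_\infty,\alpha)\ge S$ is automatic and the whole content is the strict inequality. Assume then $\cB(a_\infty,\alpha)=S$ and take $(u_n)\subset S_{p^*}$ with $\beta(u_n)=0$, $\gamma(u_n)=\tfrac12$ and $F_a(u_n)\to S$. Because $\inf_{S_{p^*}}F_a=S$ by Proposition \ref{NEM}, $(u_n)$ is also a minimising sequence for $F_a$ on all of $S_{p^*}$.

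First I would promote $(u_n)$ to a Palais--Smale sequence: Ekeland's variational principle on the $C^1$-manifold $S_{p^*}$ yields $(v_n)\subset S_{p^*}$ with $F_a(v_n)\to S$, $F_a'|_{S_{p^*}}(v_n)\to0$ and $\|u_n-v_n\|\to0$. This is a $(PS)_S$ sequence, so Corollary \ref{globalcomp} gives $v_n=\bar U_{y_n,\e_n}+\Psi_n$, where $\bar U_{y,\e}=U_{y,\e}/|U_{y,\e}|_{p^*}$ is the $L^{p^*}$-normalised Talenti bubble and $\Psi_n\to0$ in $\cD^{1,p}$. Since $a\ge0$ forces $\|\cdot\|_{\cD^{1,p}}\le\|\cdot\|$, we get $u_n-v_n\to0$ in $\cD^{1,p}$ as well, hence $u_n=\bar U_{y_n,\e_n}+\Phi_n$ with $\Phi_n\to0$ in $\cD^{1,p}$, while $u_n$ still satisfies both constraints exactly.

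The heart of the argument is to show the scaling parameters cannot degenerate, i.e. that along a subsequence $y_n\to0$ and $\e_n\to\e_*\in(0,\infty)$. Setting $w_n(\xi)=\e_n^{(N-p)/p}u_n(\e_n\xi+y_n)=\bar U_{0,1}+\tilde\Phi_n$, one has $w_n\to W:=\bar U_{0,1}$ in $\cD^{1,p}$, so $|w_n|^{p^*}\to|W|^{p^*}$ in $L^1$ with $\int|W|^{p^*}=1$. Using $\beta(u_n)=0$ and the change of variables $x=\e_n\xi+y_n$, the concentration rate \eqref{gamma} takes the explicit form
$$
\gamma(u_n)=\int_{\R^N}|w_n(\xi)|^{p^*}\,e^{-|\e_n\xi+y_n|}\,d\xi,
$$
which is a quantitative version of Remark \ref{BG}. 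A generalised dominated convergence argument then forces $\gamma(u_n)\to0$ whenever $\e_n\to\infty$, or $\e_n$ is bounded while $|y_n|\to\infty$, and $\gamma(u_n)\to1$ whenever $\e_n\to0$ with $y_n\to0$; since $\gamma(u_n)=\tfrac12$, all these regimes are excluded. The only surviving degenerate possibility is $\e_n\to0$ with $y_n\to y_*\neq0$. Here I would instead use the barycenter: as $\e_n\to0$, the densities $|u_n|^{p^*}\,dx$ (which are $L^1$-close to $|\bar U_{y_n,\e_n}|^{p^*}\,dx$ because $\Phi_n\to0$ in $L^{p^*}$) concentrate at $y_n$, so $\nu(u_n)\to\mathbf{1}_{B_1(y_n)}$ and, by \eqref{beta}, $\beta(u_n)\to y_*$, contradicting $\beta(u_n)=0$. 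Hence $\e_n\to\e_*\in(0,\infty)$ and $y_n$ stays bounded, so $\bar U_{y_n,\e_n}\to\bar U_{y_*,\e_*}$ and $u_n\to u_*:=\bar U_{y_*,\e_*}$ strongly in $\cD^{1,p}$; continuity of $\beta$ and \eqref{invarianza} then give $y_*=0$.

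Finally I close the contradiction. Since $u_n\to u_*=\bar U_{0,\e_*}$ in $L^{p^*}$, a subsequence converges a.e., and Fatou's lemma together with $a\ge0$ yields $\liminf_n\int a\,|u_n|^p\ge\int a\,|\bar U_{0,\e_*}|^p$. On the other hand $F_a(u_n)=\|\nabla u_n\|_p^p+\int a\,|u_n|^p$ with $\|\nabla u_n\|_p^p\ge S$, whence $\int a\,|u_n|^p\le F_a(u_n)-S\to0$. Thus $\int a\,|\bar U_{0,\e_*}|^p=0$; but $\bar U_{0,\e_*}>0$ everywhere and $a=a_\infty+\alpha\ge0$ is not identically zero by hypothesis, so this integral is strictly positive --- a contradiction. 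Therefore $\cB(a_\infty,\alpha)>S$. I expect the main obstacle to be precisely the concentrating case $\e_n\to0$, $y_n\to y_*\neq0$, where the interplay between the vanishing remainder $\Phi_n$ and the collapsing bubble must be controlled through the $L^1$-convergence of the densities $|u_n|^{p^*}$ in order to identify the limiting barycenter.
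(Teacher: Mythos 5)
Your proof is correct and follows essentially the same strategy as the paper's: argue by contradiction, upgrade the minimizing sequence via Ekeland to a $(PS)_S$ sequence so that Corollary \ref{globalcomp} yields the single-bubble decomposition, rule out the degenerate regimes of $(\e_n,y_n)$ using the constraints $\gamma(u_n)=\tfrac12$ and $\beta(u_n)=0$, and conclude with Fatou's lemma applied to $\int a|u_n|^p$. The only (immaterial) differences are that the paper applies the decomposition to the sequence viewed as minimizing for $F_0$ rather than for $F_a$, and pins down $\e_n$ before $y_n$ instead of treating the regimes jointly through the explicit rescaled formula for $\gamma$.
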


\proof
  
Inequality $\cB(a_\infty,\alpha)\ge S$ follows immediately from \eqref{Sobolev} and $a\ge 0$.
Then, assume by contradiction that $\cB(a_\infty,\alpha)=S$ holds.
Let $(u_n)$ be a minimizing sequence for $\cB(a_\infty,\alpha)$.
Observe that 
\beq
\label{este}
S\le F_0(u_n)\le F_a(u_n)\to S,
\eeq
so, in particular, $(u_n)$ is a minimizing sequence for $F_0$, too.
By the Ekeland variational principle, we can assume that $(u_n+\phi_n)$, with $\phi_n\to 0$ in $\cD^{1,p}(\R^N)$, is a Palais-Smale sequence for $F_0$.
Then, by Corollary \ref{globalcomp} applied to the case $a\equiv 0$, we can assume that there exists a sequence of points $(y_n)$ and a sequence of positive values $(\e_n)$ such that
\beq
\label{1855}
u_n(x)=U_{ y_n,\e_n}(x)+\Phi_n(x),\quad \ \mbox{ with }\ \Phi_n\to 0\ \mbox{ in }\cD^{1,p}(\R^N).
\eeq

Now, we claim that, up to a subsequence, 
\beq
\label{1852}
(a)\ \lim_{n\to\infty}\e_n=\bar\e>0,\qquad (b)\ \lim_{n\to\infty}y_n=0.
\eeq
In order to prove \eqref{1852}{\em (a)}, we start to show that $(\e_n)$ is bounded.

\smallskip

Assume, by contradiction, that $\e_n\to\infty$, up to a subsequence. 
Then, by  \eqref{1855} $u_n\to 0$ in $L^{p^*}_{\loc}(\R^N)$ so, for every $\eta, r>0$ we have $|u_n|^{p^*}_{p^*,B_r(0)}<\frac{\eta}{|B_r(0)|}$ for large $n$.
Hence $\gamma(u_n)\le \eta+e^{-r}$ and $\gamma(u_n)\to 0$ as $\eta$ and $r$ were arbitrary, contradicting that $\gamma(u_n)\equiv\tfrac 12$.

\smallskip

Assume now, contradicting our claim, that $\e_n\to0$, up to a subsequence. 
In such a case, 
\beq
\label{new}
\nu(u_n)(y_n)\to 1\ \mbox{ and }\nu(u_n)(x)\to 0 \mbox{ uniformly in }\R^N\setminus B_r(y_n),\ \forall r>0.
\eeq
As a consequence, for every $r>0$ we have $\supp \widehat u\subseteq B_r(y_n)$ for large $n$, that implies $\beta(u_n)\in B_r(y_n)$. 
Since As $r$ was arbitrary, we deduce $y_n=\beta(u_n)+o(1)=o(1)$.
So,
\beq
\label{new1}
\begin{split}
\gamma(u_n)&=\int_{\R^N}|u_n(x)^{p^*}e^{-|x|} dx\ge \int_{B_{\log \frac43}(0)}|U_{y_n,\e_n}(x)+\Phi_n(x)|^{p^*}e^{-|x|}dx\\
& \ge \frac34\int_{B_{\log \frac43}(0)}|U_{ y_n,\e_n}(x)+\Phi_n(x)|^{p^*}dx=\frac34(1+o(1)),
\end{split}
\eeq
  contrary to $\gamma(u_n)\equiv \tfrac12$.
Then, the proof of \eqref{1852}$(a)$ is completed.

\smallskip

In order to prove \eqref{1852}$(b)$, let us remark that $ \e_n\to \bar\e>0$ and $\Phi_n\to 0$ in $L^{p^*}(\R^N)$ imply $\widehat u_n(x)-\widehat U_{y_n,\e_n}\to 0$ in $B_{2r}(y_n)$ and $\widehat u_n(x)-\widehat U_{ y_n,\e_n}\equiv 0$ in $\R^N\setminus B_{2r}(y_n)$, where $r>0$ is such that $\supp \widehat U_{0,\bar \e}\subseteq B_r(0)$.

Then $\beta(u_n)-\beta (U_{y_n,\e_n})\to 0$, that is $y_n\to 0$ because $\beta(u_n)\equiv 0$ and $\beta (U_{y_n,\e_n})= y_n$, $\forall n\in n$.
So, also \eqref{1852}$(b)$ is proved.

\smallskip

By \eqref{1852} and \eqref{b} we get 
\beq
\label{Clapp}
\lim_{n\to\infty} F_a(u_n)=S+\int_{\R^N}a (x)U_{0,\bar\e}^pdx>S
\eeq
which is not compatible with \eqref{este}.
\qed

\medskip

In the following, we consider $\phi\in\cC^\infty_0(\R^N)$ such that 
\beq
\label{BP}
\left\{
\begin{aligned}
&\phi\quad\mbox{ is radial }\\
&\supp \phi\subset B_1(0)\\
&F_0(\phi)<\min\{\cB(0,\alpha),2^{p/N}\}.
\end{aligned}
\right.
\eeq
Observe that this choice of $\phi$ depends on $\alpha\in L^{N/p}(\R^N)\setminus\{0\}$.

\begin{lemma}
\label{lemma}
Let $\phi_{z,\e}$ be according to Definition \ref{def_riscalate}, then
\begin{itemize}
\item[(i)] $\beta(\phi_{z,\e})=z$, \qquad $\forall z\in \R^N$;
\item[(ii)] $\lim\limits_{\e\to 0}\gamma (\phi_{z,\e} )=1$, $\lim\limits_{\e\to\infty}\gamma(\phi_{z,\e} )=0$, uniformly with respect to $z\in\R^N$.
\end{itemize}
\end{lemma}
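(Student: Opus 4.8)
The plan is to reduce both statements to the single dilation $\phi_{0,\e}$ centered at the origin and then transport the conclusions to arbitrary $z$ by the translation rules \eqref{invarianza}. The two structural facts I would rely on are that $\phi$ is radial, hence so is every dilation $\phi_{0,\e}$, and that $\phi_{z,\e}=\phi_{0,\e}(\cdot-z)$, which is immediate from Definition \ref{def_riscalate}.

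For part (i), I would first note that since $\phi$ is radial the dilated function $\phi_{0,\e}(x)=\e^{-(N-p)/p}\phi(x/\e)$ is again radial; by the listed property that $\beta$ vanishes on radial functions this forces $\beta(\phi_{0,\e})=0$. Writing $\phi_{z,\e}=\phi_{0,\e}(\cdot-z)$ and applying the first identity in \eqref{invarianza} then yields $\beta(\phi_{z,\e})=\beta(\phi_{0,\e})+z=z$, as required.

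For part (ii), the key observation is that the concentration rate $\gamma$ is genuinely translation invariant: applying the second identity in \eqref{invarianza} to $\phi_{z,\e}=\phi_{0,\e}(\cdot-z)$ gives $\gamma(\phi_{z,\e})=\gamma(\phi_{0,\e})$ for every $z\in\R^N$. Thus the quantity under study carries no $z$-dependence at all, and its asymptotics as $\e\to 0$ and $\e\to\infty$ reduce to the single-parameter statement for $\phi_{0,\e}$. Since $\beta(\phi)=0$ ($\phi$ being radial), Remark \ref{BG} applies verbatim with $u=\phi$ and gives $\lim_{\e\to 0}\gamma(\phi_{0,\e})=1$ and $\lim_{\e\to\infty}\gamma(\phi_{0,\e})=0$. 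Because $\gamma(\phi_{z,\e})\equiv\gamma(\phi_{0,\e})$, these limits are uniform in $z$ with nothing further to prove.

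I expect no serious obstacle: once the invariances \eqref{invarianza} and Remark \ref{BG} are in hand, the lemma is essentially bookkeeping. The only points deserving a line of care are checking that a dilation preserves radiality, so that $\beta$ vanishes \emph{before} the translation step in (i), and observing that the exact $z$-independence of $\gamma$ makes the uniformity in (ii) automatic rather than something requiring a joint $\e$--$z$ estimate.
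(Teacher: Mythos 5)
Your proposal is correct and follows essentially the same route as the paper: part (i) from the radiality of $\phi$ (hence of $\phi_{0,\e}$) together with the translation rule in \eqref{invarianza}, and part (ii) from Remark \ref{BG} combined with the exact $z$-independence of $\gamma$ given by \eqref{invarianza}, which makes the uniformity in $z$ automatic. The paper's own proof is a two-line statement of exactly these ingredients, so there is nothing to add.
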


The proof of point $(i)$ follows direcly from the radial symmetry of $\phi$ and of the definition of $\beta$.
 Point $(ii)$  follows from Remark \ref{BG}, taking into account \eqref{invarianza}.

\begin{lemma}
\label{Lcet}
Let us assume $\alpha\in L^{N/p}(\R^N)$, then
\beq
\label{cet1}
(i)\ \lim_{\e\to\infty}\int_{\R^N}\alpha(x)\phi_{z,\e}^{p}dx=0,\quad (ii)\   \lim_{\e\to 0}\int_{\R^N}\alpha(x)\phi_{z,\e}^{p}dx=0,\quad \mbox{uniformly w.r.t.} \ z\in\R^N.
\eeq
and 
\beq
\label{cet2}
 \lim_{|z|\to\infty}\int_{\R^N}\alpha(x)\phi_{z,\e}^{p}dx=0,\quad \mbox{uniformly w.r.t.}\ \e>0.
\eeq
 \end{lemma}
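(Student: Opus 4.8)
The plan is to reduce all three statements to two elementary estimates for the rescaled profiles $\phi_{z,\e}$ and then to handle the different regimes of $\e$ and $|z|$ by one and the same density argument. First I would record the relevant scaling facts. Since $\supp\phi\subset B_1(0)$, the function $\phi_{z,\e}(x)=\e^{-(N-p)/p}\phi((x-z)/\e)$ is supported in $\overline{B_\e(z)}$; moreover the exponent $p^*$ is precisely the one for which the $\cD^{1,p}$-dilation preserves the $L^{p^*}$-norm, so $|\phi_{z,\e}|_{p^*}=|\phi|_{p^*}$, while the amplitude satisfies $\|\phi_{z,\e}\|_\infty^p=\e^{-(N-p)}\|\phi\|_\infty^p$. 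Combining the support property with H\"older's inequality (conjugate exponents $N/p$ and $N/(N-p)$) then yields the basic estimate
$$
\left|\int_{\R^N}\alpha\,\phi_{z,\e}^p\,dx\right|\le\int_{B_\e(z)}|\alpha|\,\phi_{z,\e}^p\,dx\le |\phi|_{p^*}^p\left(\int_{B_\e(z)}|\alpha|^{N/p}\,dx\right)^{p/N},
$$
valid for all $z\in\R^N$ and $\e>0$; I will refer to this as the H\"older bound and to $\|\phi_{z,\e}\|_\infty^p=\e^{-(N-p)}\|\phi\|_\infty^p$ as the amplitude bound.

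For \eqref{cet2}{\em (ii)} I would use the H\"older bound directly: as $\e\to0$ the ball $B_\e(z)$ shrinks, and $\sup_z\int_{B_\e(z)}|\alpha|^{N/p}\to0$ by the absolute continuity of the integral, \emph{uniformly} in the centre $z$. This uniform absolute continuity is itself proved by density: approximating $|\alpha|^{N/p}\in L^1(\R^N)$ by a bounded compactly supported $h$ with $\||\alpha|^{N/p}-h\|_1<\eta$, one gets $\int_{B_\e(z)}|\alpha|^{N/p}\le\|h\|_\infty|B_\e(0)|+\eta$, whose first term vanishes as $\e\to0$ regardless of $z$. For \eqref{cet1}{\em (i)} the H\"older bound is useless, since the $L^{N/p}$-mass of $\alpha$ over $B_\e(z)$ does not decay as $\e\to\infty$; instead I would split $\alpha=g+(\alpha-g)$ with $g$ bounded and compactly supported and $\|\alpha-g\|_{N/p}<\eta$. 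The remainder is controlled uniformly in $z,\e$ by the H\"older bound as $|\phi|_{p^*}^p\,\eta$, while the main term is estimated crudely by the amplitude bound, $\int_{\R^N}|g|\,\phi_{z,\e}^p\le\|g\|_\infty\,\e^{-(N-p)}\|\phi\|_\infty^p\,|\supp g|$, which tends to $0$ as $\e\to\infty$ uniformly in $z$ because $N>p$.

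Finally, \eqref{cet2} requires combining the two mechanisms, and this is the only point needing real care, because one must be uniform over \emph{all} $\e>0$ simultaneously. I would split according to whether $\e\le|z|/2$ or $\e>|z|/2$. In the first regime $B_\e(z)\subset\{x:|x|\ge|z|/2\}$, so the H\"older bound dominates the integral by $|\phi|_{p^*}^p$ times the tail $\big(\int_{\{|x|\ge|z|/2\}}|\alpha|^{N/p}\big)^{p/N}$, which vanishes as $|z|\to\infty$ since $|\alpha|^{N/p}\in L^1$. In the second regime $\e>|z|/2$ I would again write $\alpha=g+(\alpha-g)$: the remainder is $\le|\phi|_{p^*}^p\,\eta$, and the $g$-term is bounded by the amplitude estimate $\|g\|_\infty\,(|z|/2)^{-(N-p)}\|\phi\|_\infty^p\,|\supp g|\to0$. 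Letting $|z|\to\infty$ and then $\eta\to0$ concludes.

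The main obstacle is not any individual estimate but the uniformity, and specifically the dichotomy in \eqref{cet2}: no single bound is simultaneously effective for small and large $\e$, so one is forced to pair the scale-invariant H\"older bound (which controls the small-$\e$ and tail regimes) with the sup-norm amplitude decay together with an $L^{N/p}$-density approximation (which controls the large-$\e$ regime). Everything else reduces to a routine change of variables and an application of H\"older's inequality.
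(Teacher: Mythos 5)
Your proof is correct and follows essentially the same strategy as the paper: H\"older's inequality with exponents $N/p$ and $N/(N-p)$ together with the smallness of $\alpha$ in $L^{N/p}$ on small or far-away sets, the sup-norm decay $\|\phi_{z,\e}\|_\infty^p=\e^{-(N-p)}\|\phi\|_\infty^p$ for large $\e$, and a dichotomy in $\e$ for \eqref{cet2}. The only (cosmetic) difference is that you approximate $\alpha$ by a bounded compactly supported function in $L^{N/p}$, whereas the paper truncates the domain of integration to a large ball and uses the local $L^1$-integrability of $\alpha$ there; both implementations are sound.
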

By Lemma \ref{Lcet} we have that
\beq
\label{prova}
\lim_{R\to\infty}\max\{F_\alpha(\phi_{z,\e})\ :\ (z,\e)\in \partial Q_R\}=F_0(\phi),
\eeq
where
\beq
\label{Far}
Q_R:= B_R(0)\times(R^{-1},R),\qquad R>1.
\eeq
 
\begin{prop}\label{Pcet}
Let us fix $\alpha\in L^{N/p}(\R^N)\setminus\{0\}$ and let $a_\infty$ be as in \eqref{b}. Then, $\bar R=\bar R(\alpha)>1$ existes such that 
$$
\cG(a_\infty):=\max\{F(\phi_{z,\e})\ :\ (z,\e)\in \partial Q_{\bar R}\} 
$$
verifies
\beq
\label{aa1}
\cG(0)<\min\{\cB(0,\alpha),2^{p/N}S\}.
\eeq
Moreover,
\beq
\label{aa2}
\lim_{|a_\infty|_\infty\to 0}\cG(a_\infty)=\cG(0)
\eeq
and
\beq
\label{Pac}
\gamma(\phi_{z,1/\bar R})>1/2,\qquad \gamma(\phi_{z,\bar R})<1/2\qquad \forall z\in\R^N.
\eeq
\end{prop}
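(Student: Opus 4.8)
The plan is to select a single radius $\bar R$, large enough to force all three conclusions, and then to read off \eqref{aa2} from a crude but uniform comparison between $F_a$ and $F_\alpha$ on the (compact) boundary $\partial Q_{\bar R}$.

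First I would dispose of \eqref{aa1} and \eqref{Pac}, which concern only the value $a_\infty\equiv 0$, i.e.\ $a\equiv\alpha$ and $F=F_\alpha$, so that $\cG(0)=\max\{F_\alpha(\phi_{z,\e}):(z,\e)\in\partial Q_{\bar R}\}$. By the limit \eqref{prova} this maximum tends to $F_0(\phi)$ as $\bar R\to\infty$, and the choice \eqref{BP} of $\phi$ places $F_0(\phi)$ strictly below $\min\{\cB(0,\alpha),2^{p/N}S\}$; hence there is $R_1>1$ with $\cG(0)<\min\{\cB(0,\alpha),2^{p/N}S\}$ for every $\bar R\ge R_1$. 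For \eqref{Pac}, Lemma \ref{lemma}$(ii)$ gives $\gamma(\phi_{z,\e})\to1$ as $\e\to0$ and $\gamma(\phi_{z,\e})\to0$ as $\e\to\infty$, both uniformly in $z\in\R^N$; therefore there is $R_2>1$ such that $\gamma(\phi_{z,1/\bar R})>\tfrac12$ and $\gamma(\phi_{z,\bar R})<\tfrac12$ for all $z$ whenever $\bar R\ge R_2$. Fixing any $\bar R\ge\max\{R_1,R_2\}$ secures \eqref{aa1} and \eqref{Pac} at once.

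It then remains to prove \eqref{aa2} with this frozen $\bar R$. Writing $F_a(\phi_{z,\e})=F_\alpha(\phi_{z,\e})+\int_{\R^N}a_\infty|\phi_{z,\e}|^p\,dx$ and using $0\le a_\infty\le|a_\infty|_\infty$, the point is to bound the last integral uniformly on $\partial Q_{\bar R}$. Unlike the $\cD^{1,p}$- and $L^{p^*}$-norms, the $L^p$-norm is \emph{not} invariant under the scaling of Definition \ref{def_riscalate}: a change of variables gives $\int_{\R^N}|\phi_{z,\e}|^p\,dx=\e^p|\phi|_p^p$, independent of $z$. Since $\e\le\bar R$ on $\partial Q_{\bar R}$, this yields
$$0\le \int_{\R^N}a_\infty|\phi_{z,\e}|^p\,dx\le |a_\infty|_\infty\,\bar R^{\,p}\,|\phi|_p^p\qquad\text{for all }(z,\e)\in\partial Q_{\bar R}.$$
Hence $\sup_{\partial Q_{\bar R}}|F_a(\phi_{z,\e})-F_\alpha(\phi_{z,\e})|\le|a_\infty|_\infty\bar R^{\,p}|\phi|_p^p$, and since $|\max f-\max g|\le\sup|f-g|$ we obtain $|\cG(a_\infty)-\cG(0)|\le|a_\infty|_\infty\bar R^{\,p}|\phi|_p^p\to0$ as $|a_\infty|_\infty\to0$, which is \eqref{aa2}.

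Given Lemma \ref{Lcet}, Lemma \ref{lemma} and \eqref{prova}, the argument is largely bookkeeping; the only genuine point is the $\e^p$ weight arising from the non-invariance of the $L^p$-norm, which is harmless precisely because $\bar R$ is chosen before $a_\infty$ is let to vanish. To justify that the maxima defining $\cG(\cdot)$ are attained and vary continuously with the data, I would also record that $\partial Q_{\bar R}=\big(\partial B_{\bar R}(0)\times[\bar R^{-1},\bar R]\big)\cup\big(\overline{B_{\bar R}(0)}\times\{\bar R^{-1},\bar R\}\big)$ is compact and that $(z,\e)\mapsto F_a(\phi_{z,\e})$ is continuous; no further obstruction arises.
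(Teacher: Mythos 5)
Your proof is correct and follows essentially the same route as the paper's: \eqref{aa1} from \eqref{BP} and \eqref{prova}, \eqref{Pac} from Lemma \ref{lemma}, and \eqref{aa2} from the uniform bound $\int_{\R^N}a_\infty|\phi_{z,\e}|^p\,dx\le|a_\infty|_\infty\,\e^p|\phi|_p^p\le|a_\infty|_\infty\,\bar R^{\,p}|\phi|_p^p$ on the compact set $\partial Q_{\bar R}$. Your explicit scaling computation is in fact slightly more careful than the paper's one-line justification, which merely invokes the finiteness of the relevant norm over $\partial Q_{\bar R}$.
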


\proof
Estimate \eqref{aa1} follows by \eqref{BP} and \eqref{prova}.
The concentration rate in \eqref{Pac} can be deduced by Lemma \ref{lemma}. 
The asymptotic behavior \eqref{aa2} follows as $\max_{(z,\e)\in\partial Q_{\bar R}}|\phi_{z,\e}|_1<\infty$.
\qed

\medskip

\noindent {\em Proof of Lemma \ref{Lcet}.}\ 
First, let us prove \eqref{cet1}{\em (i)}.

Fix $\eta>0$, and pick $r_\eta>0$ such that $|\alpha|_{N/p,\R^N\setminus B_{r_\eta}(0)}<\tfrac{\eta}{ 2}$.
Now, let $\e_\eta>0$ be such that $\max\phi_{z,\e}<\tfrac\eta2(|\alpha|_{1,  B_{r_\eta}(0)})^{-1}$, $\forall\e>\e_\eta$.
Then, for $\e>\e_\eta$,
$$
\int_{\R^N}\alpha(x)\phi_{z,\e}^{p}dx=\int_{\R^N\setminus B_{r_\eta}(0)}+\int_{  B_{r_\eta}(0)}\alpha(x)\phi_{z,\e}^{p}dx\le|\alpha|_{N/p,\R^N\setminus B_{r_\eta}(0)}+\max\phi_{z,\e}|\alpha_{1,  B_{r_\eta}(0)}<\eta.
$$

\smallskip

To prove \eqref{cet1}{\em (ii)}, it is enough to observe that
$$
\int_{\R^N}\alpha(x)\phi_{z,\e}^{p}dx\le |\alpha|_{N/p,\supp \phi_{z,\e}}
$$
and that $|\supp \phi_{z,\e}|\to 0$ as $\e\to 0$, uniformly w.r.t. $z\in \R^N$.

\smallskip

Now, let us prove \eqref{cet2}.

 For any $\eta>0$, by \eqref{cet1}{\em (i)} there exists $\e_\eta>0$ such that 
$$
\int_{\R^N}\alpha(x)\phi_{z,\e}^{p}dx<\eta \qquad\forall \e>\e_\eta,\quad
\forall z\in\R^N.
$$
Then we are left to prove that there exists $R_\eta>0$ such that 
\beq
\label{Sorrentino}
\int_{\R^N}\alpha(x)\phi_{z,\e}^{p}dx<\eta \qquad \forall \e\le \e_\eta^{-1},\quad \forall z\in \R^N,\ |z|>R_\eta. 
\eeq
Let $r>0$ be such that $|\alpha|_{N/p,\R^N\setminus B_{r_\eta}(0)}< {\eta}$ and $R_\eta>0$ such that 
$$
\supp \phi_{z,\e}\subseteq \R^N\setminus B_{r_\eta}(0)\qquad\forall \e\le\e_\eta,\ \forall z\in\R^N,\ |z|>R_\eta.
$$
Then \eqref{Sorrentino} follows by H\"older inequality.
\qed

\medskip

\medskip
  
\begin{lemma}
\label{L5.7}
Let  $a=a_\infty+\alpha$ as in \eqref{b}. If $a_\infty\not\equiv 0$ then 
\beq
\label{LL}
\cL(a_\infty):=\inf\{F_a(u)\ :\ u\in S_{p^*}(\R^N),\ \beta(u)=0,\ \gamma(u)\le\tfrac 12 \}>S.
\eeq
Moreover, for every fixed $\alpha\in L^{N/p}(\R^N)\setminus\{0\}$, $\alpha\ge 0$,
\beq
\label{sc}
\lim_{|a_\infty|_\infty\to 0}\cL(a_\infty)=S.
\eeq
\end{lemma}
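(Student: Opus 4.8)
The plan is to prove the two assertions separately: the strict inequality $\cL(a_\infty)>S$ by a contradiction argument in the spirit of Lemma~\ref{L5.3}, and the limit \eqref{sc} by an explicit test-function estimate. First, $\cL(a_\infty)\ge S$ is immediate from \eqref{Sobolev} and $a\ge 0$. For the strict inequality I would suppose $\cL(a_\infty)=S$ and pick $(u_n)\subset S_{p^*}$ with $\beta(u_n)=0$, $\gamma(u_n)\le\tfrac12$ and $F_a(u_n)\to S$. Since $S\le F_0(u_n)\le F_a(u_n)\to S$, the sequence also minimises $F_0$, and from $a\ge a_\infty\ge\min_{S^{N-1}}\Theta>0$ one gets
$$\Big(\min_{S^{N-1}}\Theta\Big)\,|u_n|_p^p\le\int_{\R^N}a\,|u_n|^p\,dx=F_a(u_n)-F_0(u_n)\longrightarrow 0,$$
hence $|u_n|_p\to0$. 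Arguing as in Lemma~\ref{L5.3} (Ekeland's principle together with Corollary~\ref{globalcomp} applied to $a\equiv0$), I may then assume $u_n=U_{y_n,\e_n}+\Phi_n$ with $\Phi_n\to0$ in $\cD^{1,p}(\R^N)$.

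The decisive difference from Lemma~\ref{L5.3} is that the relaxed constraint $\gamma(u_n)\le\tfrac12$ does not exclude the spreading regime $\e_n\to\infty$ (there it only forces $\gamma(u_n)\to0$), so boundedness of $(\e_n)$ is no longer available from the concentration rate, and I expect this exclusion to be the main obstacle. I would instead extract concentration from the $L^p$ information already obtained: setting $w_n(x):=\e_n^{(N-p)/p}u_n(\e_n x+y_n)=U_{0,1}+\tilde\Phi_n$, with $\tilde\Phi_n\to0$ in $\cD^{1,p}$ and thus $w_n\to U_{0,1}$ a.e.\ along a subsequence, Fatou's lemma (which uses only this a.e.\ convergence, so that no $L^p$-control on $\tilde\Phi_n$ is needed) gives
$$\liminf_n\e_n^{-p}\,|u_n|_p^p=\liminf_n|w_n|_p^p\ge|U_{0,1}|_p^p\in(0,+\infty].$$
Together with $|u_n|_p\to0$ this forces $\e_n\to0$, irrespective of whether $|U_{0,1}|_p$ is finite; in particular the argument covers the low-dimensional range $N\le p^2$, where $U_{0,1}\notin L^p(\R^N)$. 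But $\e_n\to0$ with $\beta(u_n)=0$ yields, exactly as in \eqref{new}--\eqref{new1}, that $\gamma(u_n)\ge\tfrac34+o(1)>\tfrac12$, contradicting $\gamma(u_n)\le\tfrac12$. Thus $\cL(a_\infty)>S$. The point is that the single observation $|u_n|_p\to0$ simultaneously rules out $\e_n\to\infty$ and $\e_n\to\bar\e\in(0,\infty)$ through the Fatou bound, leaving only the concentration scenario that the constraint forbids.

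For the limit \eqref{sc}, since $\cL(a_\infty)\ge S$ it is enough to prove $\limsup_{|a_\infty|_\infty\to0}\cL(a_\infty)\le S$. I would fix $\delta>0$, choose a radial $\phi\in S_{p^*}\cap\cC^\infty_0(\R^N)$ with $F_0(\phi)<S+\delta$, and then, using Lemma~\ref{lemma}$(ii)$ and \eqref{cet1}$(i)$, fix $\e_0$ large (depending only on $\delta,\alpha,\phi$) so that $\gamma(\phi_{0,\e_0})<\tfrac12$ and $\int_{\R^N}\alpha\,\phi_{0,\e_0}^p\,dx<\delta$. Being radial, $\phi_{0,\e_0}$ is admissible ($\beta=0$, $\gamma\le\tfrac12$), and the scale invariance of $F_0$ gives
$$\cL(a_\infty)\le F_a(\phi_{0,\e_0})=F_0(\phi)+\int_{\R^N}a_\infty\,\phi_{0,\e_0}^p\,dx+\int_{\R^N}\alpha\,\phi_{0,\e_0}^p\,dx\le S+2\delta+|a_\infty|_\infty\,\e_0^p\,|\phi|_p^p,$$
where $|\phi_{0,\e_0}|_p^p=\e_0^p|\phi|_p^p<\infty$ since $\phi$ is compactly supported. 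Letting $|a_\infty|_\infty\to0$ with $\delta,\phi,\e_0$ fixed gives $\limsup_{|a_\infty|_\infty\to0}\cL(a_\infty)\le S+2\delta$, and $\delta\to0$ yields \eqref{sc}.
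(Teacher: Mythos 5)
Your proof is correct. The argument for \eqref{sc} is essentially the paper's own (a radial, compactly supported, almost optimal test function, dilated until $\gamma<\tfrac12$, with the $a_\infty$-term controlled by $|a_\infty|_\infty$ times a fixed finite $L^p$-mass). For the strict inequality \eqref{LL} you follow the same skeleton as the paper --- contradiction, Ekeland, the decomposition $u_n=U_{y_n,\e_n}+\Phi_n$ from Corollary \ref{globalcomp}, and exclusion of $\e_n\to0$ via \eqref{new}--\eqref{new1} --- but you handle the two non-concentrating regimes by a genuinely different and more unified device. The paper first rules out $\e_n\to\infty$ by showing that $F_a(u_n)\ge S+\big(\min_{S^{N-1}}\Theta\big)\int_{B_{\e_n}(y_n)}u_n^p\,dx$ blows up (Fatou on $B_1$ after rescaling), and then, once $\e_n\to\bar\e\in(0,\infty)$ and $y_n\to0$ are established, derives the contradiction from the strict gain $\liminf_n F_a(u_n)\ge S+\int_{\R^N}a\,U_{0,\bar\e}^p\,dx>S$ as in \eqref{Clapp}. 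You instead observe once and for all that $\int_{\R^N}a|u_n|^p\,dx=F_a(u_n)-F_0(u_n)\to0$ combined with $a\ge\min_{S^{N-1}}\Theta>0$ forces $|u_n|_p\to0$, while Fatou on the rescaled bubbles gives $\liminf_n\e_n^{-p}|u_n|_p^p\ge|U_{0,1}|_p^p>0$; these are incompatible unless $\e_n\to0$, so both delocalised cases are excluded in one stroke and no analogue of \eqref{Clapp} is needed. This buys a shorter case analysis and, as you note, is insensitive to whether $U_{0,1}\in L^p(\R^N)$ (only $|U_{0,1}|_p^p>0$ is used); the only point requiring care is to run the Fatou bound along the subsequence on which the rescaled remainder converges a.e., which your phrasing already accommodates. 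Both arguments exploit the uniform positivity of $a_\infty$ in an essential way, consistently with the fact that the conclusion fails for $a_\infty\equiv0$.
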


\proof
Let $a_\infty>0$ and assume, by contradiction, that $\cL=S$.
Then, arguing as in the proof of Lemma \ref{L5.3}, we can find a sequence $(u_n)$ in $S_{p^*}(\R^N)$ such that
\begin{eqnarray}
\vspace{2mm}
&& \label{Sus}
\lim_{n\to\infty}F_a(u_n)= \lim_{n\to\infty}F_0(u_n) =S,\\
\vspace{2mm}
&&\label{Sus1}\beta(u_n)=0,\ \gamma(u_n)\le\tfrac 12,\  \forall n\in\N, \\
&&\label{Sus2}u_n(x)=U_{y_n,\e_n}(x)+\Phi_n(x),\quad \ \mbox{ with }\ \Phi_n\to 0\ \mbox{ in }\cD^{1,p}(\R^N),
\end{eqnarray}
for suitable $y_n\in\R^N$ and $\e_n>0$. 
We claim that, up to a subsequence,  
\beq
\label{Scho}
(a)\ \lim_{n\to\infty}\e_n=\bar\e>0,\qquad (b)\ \lim_{n\to\infty}y_n=0,
\eeq
 yielding a contradiction as in \eqref{Clapp}. 

 \smallskip
 
To prove \eqref{Scho} we can proceed exactly as in the proof of Lemma \ref{L5.3}, except where the case $\e_n\to\infty$ is ruled out.
Here, instead, we observe that $\e^N|u_n(\e_n(x-y_n))|^{p^*}\to U^{p^*}_{0,1}$ a.e. in $\R^N$ and 
$$
F_a(u_n)\ge S+a_\infty\int_{B_{\e_n}(y_n)}u_n^pdx.
 $$
Then, if $\e_n\to\infty$, by Fatou  lemma we have
$$
\liminf_{n\to\infty}F_a(u_n)\ge S +a_\infty\liminf_{n\to\infty}\e_n^{p}\int_{B_1 (0)}U^p_{0,1}dx=\infty,
$$
contrary to  \eqref{Sus}. 
So, the proof of \eqref{LL} is complete.

\smallskip

In order to prove \eqref{sc}, we fix $\eta>0$ and choose a radial function $\bar\phi\in S_{p^*}(\R^N)\cap\cC^\infty_0(\R^N)$ such that $F_0(\bar\phi)<S+\tfrac\eta4.$
Then, by Lemma \ref{lemma} and Proposition \ref{Pcet}, we can consider a $\bar\phi_{0,\e_\eta}$ such that
\beq
\label{tri}
\beta( \bar\phi_{0,\e_\eta})=0,\quad\gamma( \bar\phi_{0,\e_\eta})<\tfrac12,\qquad F_a( \bar\phi_{0,\e_\eta})<S+\frac\eta2+ |a_\infty|_\infty \int_{\R^N}  \bar\phi_{0,\e_\eta}^pdx.
\eeq
Since $\lim\limits_{|a_\infty|_\infty\to 0}|a_\infty|_\infty \int_{\R^N} \bar\phi_{0,\e_\eta}^pdx=0$,  from \eqref{tri} we infer \eqref{sc}.
\qed

\medskip

In the compactness range we are working on, the critical points we find are constant sign functions, at it is shown by the following
\begin{prop}
\label{Psegno}
Let $\Omega\subseteq\R^N$ and  $a$   satisfies \eqref{b}.
If $u\in S_{p^*}(\R^N)$ is a critical point of $F_a$ constrained on  $S_{p^*}(\R^N)$ and $u^+,u^-\not\equiv 0$ then $F_a(u) > 2^{p/N}S$.
\end{prop}

\proof
By Proposition \ref{NEM}
\beq
\label{eCP1}
F_a(u_{\pm}) > S |u_\pm|_{p^*}^p.
\eeq
Since $u$ is a constrained critical point, then 
\beq
\label{eCP2}
F_a'(u)[v]=\lambda_u \int_\Omega |u|^{p^*-2}u\, v\, dx, \qquad \forall v\in X,
\eeq
for a suitable Lagrange multiplier $\lambda_u$.
Testing \eqref{eCP2} with $u$ we see $\lambda_u=F_a(u)$, whereas by testing it with $u_\pm$ we get
\beq
\label{eCP3}
F_a(u_\pm)=F_a(u)|u_\pm|_{p^*}^{p^*}.
\eeq
From \eqref{eCP1} and \eqref{eCP3} we obtain that
$$
|u_\pm|_{p^*}^{p^*}> \left(\frac{S}{F_a(u)}\right)^{N/p}.
$$
Hence, our claim follows from
$$
F_a(u)=F_a(u_+)+F_a(u_-)=F_a(u)(|u_+|_{p^*}^{p^*}+|u_-|_{p^*}^{p^*})>2\, F_a(u)\left(\frac{S}{F_a(u)}\right)^{N/p}.
$$
\qed

  

\sezione{The case $\Omega=\R^N$: proof of the existence Theorem \ref{R^N}}\label{unboundedsection}
 
\noindent {\em Proof of Theorem \ref{R^N}}\qquad
{\bf{\em (i)}}\qquad Let $\bar R$ as in Proposition \ref{Pcet} and define 
$$
\cA(a_\infty):=\max\{F_a(\phi_{z,\e})\ :\ (z,\e)\in Q_{\bar R}\}.
$$
By H\"older inequality, and taking into account \eqref{BP},  we can find $\Lambda>0$ such that $\cA(0)<2^{p/N}S$ for every $\alpha\in L^{N/p}(\R^N)$ such that $|\alpha|_{N/p}<\Lambda$.
Moreover, by   Lemma \ref{lemma}{\em (i)}, \eqref{Pac} and the continuity of the map $\e\to\gamma(\phi_{0,\e})$ one can easily see that there exists $\mu\in (\bar R^{-1},\bar R)$ such that $\gamma(\phi_{0,\mu})=1/2$, besides $\beta(\phi_{0,\mu})=0$, hence
\beq
\label{We}
S<\cB(0,\alpha)\le\cA(0)<2^{p/N}S.
\eeq
 
 Taking into account Theorem \ref{comp} and \eqref{We}, we can proceed exactly as in the proof of Theorem 1.1. in \cite[p.24]{CM19ESAIM} and get a critical value $c_h$ for $F_a$ constrained on $S_{p^*}(\R^N)$, in the energy range $[\cB(0,\alpha_1), \cA(0,\alpha_1)]$. 
 The critical value $c_h$ provides a solution $u_h$, that is nonnegative by Proposition \ref{Psegno}.
 
 \bigskip

{\bf{\em (ii)}}\qquad By Lemma \ref{L5.7} and Proposition \ref{Pcet}, if $|a_\infty|_\infty>0$ is sufficiently small, then
\beq
S<\cL(a_\infty,)\le \cG(a_\infty)<\min\{\cB(a_\infty,\alpha_1),2^{p/N}S\}.
\eeq
Then, taking into account Corollary \ref{comp}, we can proceed as in the proof of Theorem 1.2. in \cite[p.27]{CM19ESAIM} and get a critical value $c_l$ for $F_a$ constrained on $S_{p^*}(\R^N)$, in the energy range $[\cL(a_\infty),\cG(a_\infty)]$, that provides  a solution $u_l$ that is non-negative by Proposition \ref{Psegno}.
 
 \bigskip

{\bf{\em (iii)}}\qquad  Now, let us consider the case $a_\infty>0$ and $\alpha$ as in point $(i)$. 

Since $\phi$ and $\bar R$ are fixed, clearly $\lim_{|a_\infty|_\infty\to 0} \cA(\infty)=\cA(0)$.
Moreover, $\cB(0,\alpha)\le \cB (a_\infty,\alpha)$ for every $a_\infty$ as in \eqref{b}.
Then, proceeding as in $(i)$ and $(ii)$,  we get
\beq
\label{beq}
S<\cL(a_\infty)\le\cG(a_\infty)<\cB(0,\alpha)\le \cB(a_\infty,\alpha)\le\cA(a_\infty)<2^{p/N}S.
\eeq
Then by Theorem \ref{comp} one can argue exactly as in the proof of Theorems 1.1 and 1.2 in \cite{CM19ESAIM},  showing the existence of two distinct critical levels $c_l\in (\cL(a_\infty),\cG(a_\infty))$ and $c_h\in (\cB(a_\infty,\alpha),\cA(a_\infty))$.
So we obtain two non-negative solutions by Proposition \ref{Psegno}.

\qed



\sezione{The case $\Omega$ a bounded domain: proof of the existence Theorem \ref{bounded}} \label{boundedsection}

In the present section we use the same notation as in the previous one, working with functions in $W^{1,p}_0(\Omega)$ instead of $W^{1,p}(\R^N)$.

\begin{lemma}
\label{LVit}
Let $a$ as in \eqref{b} or $a\equiv 0$,  and $\Omega\subset\R^N$  a bounded domain, then
\beq
\label{Vit}
\cL(a_\infty)>S.
\eeq
Moreover,
\beq
\label{Vit1}
\cL(a_\infty)\to S\quad\mbox{ as }a_\infty\to 0\ \mbox{ and }\ \rho(\Omega)\to\infty,
\eeq
see \eqref{rho}. 
\end{lemma}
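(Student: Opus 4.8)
The plan is to prove \eqref{Vit} and \eqref{Vit1} along the lines of Lemma~\ref{L5.7}, the decisive new ingredient being that the boundedness of $\Omega$ replaces the hypothesis $a_\infty>0$ used there; this is exactly why \eqref{Vit} persists even for $a\equiv 0$. The lower bound $\cL(a_\infty)\ge S$ is immediate, since every admissible competitor in \eqref{LL} lies in $S_{p^*}$ and $a\ge 0$, so $F_a\ge F_0\ge S$ by \eqref{Sobolev}.

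To obtain the strict inequality \eqref{Vit} I would argue by contradiction. Assuming $\cL(a_\infty)=S$, pick a minimizing sequence $(u_n)\subset W^{1,p}_0(\Omega)\cap S_{p^*}$ with $\beta(u_n)=0$, $\gamma(u_n)\le\tfrac12$ and $F_a(u_n)\to S$. As in Lemma~\ref{L5.3}, $S\le F_0(u_n)\le F_a(u_n)\to S$ shows that $(u_n)$ also minimizes $F_0$ on $S_{p^*}$, so after an Ekeland perturbation Corollary~\ref{globalcomp} gives the decomposition $u_n=U_{y_n,\e_n}+\Phi_n$ with $\Phi_n\to 0$ in $\cD^{1,p}(\R^N)$. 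It then remains to rule out every asymptotic regime of $(y_n,\e_n)$. The concentrating case $\e_n\to 0$ is excluded verbatim as in Lemma~\ref{L5.3}, since it forces $\gamma(u_n)\to 1$, against $\gamma(u_n)\le\tfrac12$.

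The genuinely new, and main, step is to rule out the non-concentrating regime $\liminf_n\e_n>0$ \emph{without} using the sign of $a_\infty$. Here the idea is that $u_n\equiv 0$ on $\R^N\setminus\overline\Omega$, whence $\nabla\Phi_n=-\nabla U_{y_n,\e_n}$ a.e.\ there, while a Talenti profile that does not concentrate always leaves a definite amount of gradient energy outside the fixed bounded set $\Omega$. Concretely, passing to a subsequence: if $\e_n\to\bar\e\in(0,\infty)$ and $y_n\to y_0$, then $U_{y_n,\e_n}\to U_{y_0,\bar\e}$ in $\cD^{1,p}(\R^N)$ and
\[
\|\Phi_n\|_{\cD^{1,p}}^p\ \ge\ \int_{\R^N\setminus\overline\Omega}|\nabla U_{y_n,\e_n}|^p\ \longrightarrow\ \int_{\R^N\setminus\overline\Omega}|\nabla U_{y_0,\bar\e}|^p\ >\ 0,
\]
the limit being positive because $\R^N\setminus\overline\Omega$ is open and nonempty and $U_{y_0,\bar\e}$ is strictly radially decreasing about $y_0$, so that $\nabla U_{y_0,\bar\e}\neq 0$ a.e.; whereas if $\e_n\to\infty$ or $|y_n|\to\infty$ the gradient energy escapes the bounded set, so $\int_\Omega|\nabla U_{y_n,\e_n}|^p\to 0$ and again $\int_{\R^N\setminus\overline\Omega}|\nabla U_{y_n,\e_n}|^p$ tends to the full positive energy. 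In every case this contradicts $\Phi_n\to 0$ in $\cD^{1,p}$, so $\cL(a_\infty)>S$.

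Finally, for \eqref{Vit1} I would exhibit suitable competitors, as in the proof of \eqref{sc}. Given $\eta>0$, fix a radial $\bar\phi\in S_{p^*}\cap\cC^\infty_0(\R^N)$ with $F_0(\bar\phi)<S+\tfrac\eta3$ and consider its rescalings $\bar\phi_{0,\e}$ from Definition~\ref{def_riscalate}, which satisfy $\beta(\bar\phi_{0,\e})=0$ and $F_0(\bar\phi_{0,\e})=F_0(\bar\phi)$. By Remark~\ref{BG} and the estimate \eqref{cet1}$(i)$ (whose proof only uses that $\bar\phi$ is bounded with compact support) I may fix $\e^*$ so large that $\gamma(\bar\phi_{0,\e^*})\le\tfrac12$ and $\int_{\R^N}\alpha\,\bar\phi_{0,\e^*}^p<\tfrac\eta3$. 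For this fixed $\e^*$ the support of $\bar\phi_{0,\e^*}$ is a fixed ball, hence contained in $\Omega$ as soon as $\rho(\Omega)$ is large enough, making $\bar\phi_{0,\e^*}\in W^{1,p}_0(\Omega)$ admissible in \eqref{LL}; moreover $\int_{\R^N}a_\infty\,\bar\phi_{0,\e^*}^p\le|a_\infty|_\infty\,|\bar\phi_{0,\e^*}|_p^p\to 0$ as $|a_\infty|_\infty\to 0$. Writing $F_a(\bar\phi_{0,\e^*})=F_0(\bar\phi)+\int_{\R^N}a_\infty\,\bar\phi_{0,\e^*}^p+\int_{\R^N}\alpha\,\bar\phi_{0,\e^*}^p$ then yields $\cL(a_\infty)\le F_a(\bar\phi_{0,\e^*})<S+\eta$ once $|a_\infty|_\infty$ is small and $\rho(\Omega)$ is large, which together with \eqref{Vit} gives \eqref{Vit1}.
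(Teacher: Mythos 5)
Your proof is correct and follows essentially the same strategy as the paper: contradiction via the bubble decomposition of Corollary \ref{globalcomp}, exclusion of $\e_n\to 0$ through the concentration rate $\gamma$, and exclusion of the non-concentrating regimes using the boundedness of $\Omega$. The only (cosmetic) difference is in the last step, where the paper gets its contradiction from the $L^{p^*}$-mass constraint $1\equiv\int_\Omega|u_n|^{p^*}\to c<1$, while you extract the same leakage of the bubble outside $\overline\Omega$ from the gradient of $\Phi_n$; both are valid.
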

\proof
Let us prove \eqref{Vit}.
Arguing by contradiction, as in the proof of Lemma \ref{L5.7}, we get a sequence $(u_n)$ in  $W^{1,p}_0(\Omega)$ such that \eqref{Sus}--\eqref{Sus2} hold.
Using the same notation, we can easily see that $\e_n\to \bar\e\in (0,\infty]$ cannot occur. 
In fact, if this were not the case we would have, up to a subsequence, 
$$
1\equiv\int_\Omega |u_n|^{p^*}dx=\int_\Omega |U_{y_n,\e_n}+\Phi_n|^{p^*}dx\to c\in [0,1),
$$
a contradiction.
Then the only possible case is $\e_n\to 0$, that can be ruled out arguing as in \eqref{new}, \eqref{new1}.
So, \eqref{Vit} is justified.

The proof of \eqref{Vit1} follows by the proof of \eqref{sc}, taking into account that $\supp\bar\phi_{0,\eta}\subset\Omega$, when $\rho(\Omega)\to\infty$ (see \eqref{tri}).
\qed
 
  \bigskip

\noindent {\em Proof of Theorem \ref{bounded}}\qquad
{\bf{\em (i)}}\qquad Since the test function $\phi$  introduced in \eqref{BP} has compact support, Lemma \ref{Pcet} holds whenever $\rho(\Omega)$ is large enough.
If $|a_\infty|_\infty\in[0,M_1)$ then, by using Lemma \ref{LVit}, we have the following configuration of levels
$$
S<\cL(a_\infty)\le\cG(a_\infty)<\min\{\cB(a_\infty,\alpha_1),2^{p/N}S\}.
$$
So, the arguments used to prove Theorem \ref{R^N}{\em (ii)} ensure the existence of the low energy solution $u_l$.

 \medskip

{\bf{\em (ii)}}\qquad 
Let $|\alpha|_{N/p}<L$ and $|a_\infty|\in[0,M_2)$, then by \eqref{beq} we see that for large $\rho(\Omega)$ the following estimates hold
\beq
\label{We1Bis}
S<\cG(a_\infty)<\cB(a_\infty,\alpha)\le\cA(a_\infty)<2^{p/N}S. 
\eeq
 Then we can conclude as in Theorem \ref{R^N}{\em (iii)}
\qed




\begin{thebibliography}{10}

\bibitem{AlMeMo20AMPA}
W.~Albalawi, C.~Mercuri, and V.~Moroz.
\newblock Groundstate asymptotics for a class of singularly perturbed
  {$p$}-{L}aplacian problems in {$\mathbb{R}^N$}.
\newblock {\em Ann. Mat. Pura Appl. (4)}, 199(1):23--63, 2020.

\bibitem{MR1926623}
C.~O. Alves.
\newblock Existence of positive solutions for a problem with lack of
  compactness involving the {$p$}-{L}aplacian.
\newblock {\em Nonlinear Anal.}, 51(7):1187--1206, 2002.

\bibitem{AFM21DCDS}
C.~O. Alves, G.~M. Figueiredo, and R.~Molle.
\newblock Multiple positive bound state solutions for a critical {C}hoquard
  equation.
\newblock {\em Discrete Contin. Dyn. Syst.}, 41(10):4887--4919, 2021.

\bibitem{MR2292344}
A.~Ambrosetti and A.~Malchiodi.
\newblock {\em Nonlinear analysis and semilinear elliptic problems}, volume 104
  of {\em Cambridge Studies in Advanced Mathematics}.
\newblock Cambridge University Press, Cambridge, 2007.

\bibitem{BeCe90JFA}
V.~Benci and G.~Cerami.
\newblock Existence of positive solutions of the equation {$-\Delta
  u+a(x)u=u^{(N+2)/(N-2)}$} in {${\bf R}^N$}.
\newblock {\em J. Funct. Anal.}, 88(1):90--117, 1990.

\bibitem{MR0784102}
H.~Brezis and J.-M. Coron.
\newblock Convergence of solutions of {$H$}-systems or how to blow bubbles.
\newblock {\em Arch. Rational Mech. Anal.}, 89(1):21--56, 1985.

\bibitem{MR0699419}
H.~Br\'{e}zis and E.~Lieb.
\newblock A relation between pointwise convergence of functions and convergence
  of functionals.
\newblock {\em Proc. Amer. Math. Soc.}, 88(3):486--490, 1983.

\bibitem{BrNi83CPAM}
H.~Br\'{e}zis and L.~Nirenberg.
\newblock Positive solutions of nonlinear elliptic equations involving critical
  {S}obolev exponents.
\newblock {\em Comm. Pure Appl. Math.}, 36(4):437--477, 1983.

\bibitem{BrWi08JFA}
H.~Br\'ezis and M.~Willem.
\newblock On some nonlinear equations with critical exponents.
\newblock {\em J. Funct. Anal.}, 255(9):2286--2298, 2008.

\bibitem{CeDeSo05CVPDE}
G.~Cerami, G.~Devillanova, and S.~Solimini.
\newblock Infinitely many bound states for some nonlinear scalar field
  equations.
\newblock {\em Calc. Var. Partial Differential Equations}, 23(2):139--168,
  2005.

\bibitem{CM19ESAIM}
G.~Cerami and R.~Molle.
\newblock Multiple positive bound states for critical
  {S}chr\"{o}dinger-{P}oisson systems.
\newblock {\em ESAIM Control Optim. Calc. Var.}, 25:Paper No. 73, 29, 2019.

\bibitem{CePa03CVPDE}
G.~Cerami and D.~Passaseo.
\newblock The effect of concentrating potentials in some singularly perturbed
  problems.
\newblock {\em Calc. Var. Partial Differential Equations}, 17(3):257--281,
  2003.

\bibitem{CeSoSt86JFA}
G.~Cerami, S.~Solimini, and M.~Struwe.
\newblock Some existence results for superlinear elliptic boundary value
  problems involving critical exponents.
\newblock {\em J. Funct. Anal.}, 69(3):289--306, 1986.

\bibitem{CiPi04ZAMP}
S.~Cingolani and A.~Pistoia.
\newblock Nonexistence of single blow-up solutions for a nonlinear
  {S}chr\"{o}dinger equation involving critical {S}obolev exponent.
\newblock {\em Z. Angew. Math. Phys.}, 55(2):201--215, 2004.

\bibitem{MR2560133}
S.~de~Valeriola and M.~Willem.
\newblock On some quasilinear critical problems.
\newblock {\em Adv. Nonlinear Stud.}, 9(4):825--836, 2009.

\bibitem{Db83NA}
E.~DiBenedetto.
\newblock {$C^{1+\alpha }$} local regularity of weak solutions of degenerate
  elliptic equations.
\newblock {\em Nonlinear Anal.}, 7(8):827--850, 1983.

\bibitem{MR0688279}
M.~J. Esteban and P.-L. Lions.
\newblock Existence and nonexistence results for semilinear elliptic problems
  in unbounded domains.
\newblock {\em Proc. Roy. Soc. Edinburgh Sect. A}, 93(1-2):1--14, 1982/83.

\bibitem{FaMeWi19CVPDE}
A.~Farina, C.~Mercuri, and M.~Willem.
\newblock A {L}iouville theorem for the {$p$}-{L}aplacian and related
  questions.
\newblock {\em Calc. Var. Partial Differential Equations}, 58(4):Paper No. 153,
  13, 2019.

\bibitem{GaSiYaZh20PRSE}
F.~Gao, E.~D. da~Silva, M.~Yang, and J.~Zhou.
\newblock Existence of solutions for critical {C}hoquard equations via the
  concentration-compactness method.
\newblock {\em Proc. Roy. Soc. Edinburgh Sect. A}, 150(2):921--954, 2020.

\bibitem{GuVe88JDE}
M.~Guedda and L.~V\'{e}ron.
\newblock Local and global properties of solutions of quasilinear elliptic
  equations.
\newblock {\em J. Differential Equations}, 76(1):159--189, 1988.

\bibitem{GuVe89NA}
M.~Guedda and L.~V\'{e}ron.
\newblock Quasilinear elliptic equations involving critical {S}obolev
  exponents.
\newblock {\em Nonlinear Anal.}, 13(8):879--902, 1989.

\bibitem{GLLM22arXiv}
L.~Guo, Q.~Li, X.~Luo, and R.~Molle.
\newblock Standing waves for two-component elliptic system with critical growth
  in $\mathbb{R}^{4}$: the attractive case.
\newblock {\em arXiv:2211.03425}, 2022.

\bibitem{MR0834360}
P.-L. Lions.
\newblock The concentration-compactness principle in the calculus of
  variations. {T}he limit case. {I}.
\newblock {\em Rev. Mat. Iberoamericana}, 1(1):145--201, 1985.

\bibitem{MR0850686}
P.-L. Lions.
\newblock The concentration-compactness principle in the calculus of
  variations. {T}he limit case. {II}.
\newblock {\em Rev. Mat. Iberoamericana}, 1(2):45--121, 1985.

\bibitem{MeWi10DCDS}
C.~Mercuri and M.~Willem.
\newblock A global compactness result for the {$p$}-{L}aplacian involving
  critical nonlinearities.
\newblock {\em Discrete Contin. Dyn. Syst.}, 28(2):469--493, 2010.

\bibitem{Pa96AIHP}
D.~Passaseo.
\newblock Some sufficient conditions for the existence of positive solutions to
  the equation {$-\Delta u+a(x)u=u^{2^*-1}$} in bounded domains.
\newblock {\em Ann. Inst. H. Poincar\'{e} C Anal. Non Lin\'{e}aire},
  13(2):185--227, 1996.

\bibitem{PeWaYa18JFA}
S.~Peng, C.~Wang, and S.~Yan.
\newblock Construction of solutions via local {P}ohozaev identities.
\newblock {\em J. Funct. Anal.}, 274(9):2606--2633, 2018.

\bibitem{Peral97Notes}
I.~Peral.
\newblock {\em Multiplicity of Solutions for the p-Laplacian}.
\newblock Lecture notes for the Second School of Nonlinear Functional Analysis and Applications
  to Differential Equations. International Centre of Theoretical Physics -
  Trieste (Italy), 1997.

\bibitem{MR0855181}
P.~Pucci and J.~Serrin.
\newblock A general variational identity.
\newblock {\em Indiana Univ. Math. J.}, 35(3):681--703, 1986.

\bibitem{MR2356201}
P.~Pucci and J.~Serrin.
\newblock {\em The maximum principle}, volume~73 of {\em Progress in Nonlinear
  Differential Equations and their Applications}.
\newblock Birkh\"{a}user Verlag, Basel, 2007.

\bibitem{Sc16AM}
B.~Sciunzi.
\newblock Classification of positive
  {$\mathcal{D}^{1,p}(\mathbb{R}^N)$}-solutions to the critical {$p$}-{L}aplace
  equation in {$\mathbb{R}^N$}.
\newblock {\em Adv. Math.}, 291:12--23, 2016.

\bibitem{St84MZ}
M.~Struwe.
\newblock A global compactness result for elliptic boundary value problems
  involving limiting nonlinearities.
\newblock {\em Math. Z.}, 187(4):511--517, 1984.

\bibitem{Ta76AMPA}
G.~Talenti.
\newblock Best constant in {S}obolev inequality.
\newblock {\em Ann. Mat. Pura Appl. (4)}, 110:353--372, 1976.

\bibitem{MR0727034}
P.~Tolksdorf.
\newblock Regularity for a more general class of quasilinear elliptic
  equations.
\newblock {\em J. Differential Equations}, 51(1):126--150, 1984.

\bibitem{MR0240748}
N.~S. Trudinger.
\newblock Remarks concerning the conformal deformation of {R}iemannian
  structures on compact manifolds.
\newblock {\em Ann. Scuola Norm. Sup. Pisa Cl. Sci. (3)}, 22:265--274, 1968.

\bibitem{Ve16JDE}
J.~V\'{e}tois.
\newblock A priori estimates and application to the symmetry of solutions for
  critical {$p$}-{L}aplace equations.
\newblock {\em J. Differential Equations}, 260(1):149--161, 2016.

\bibitem{MR1400007}
M.~Willem.
\newblock {\em Minimax theorems}, volume~24 of {\em Progress in Nonlinear
  Differential Equations and their Applications}.
\newblock Birkh\"{a}user Boston, Inc., Boston, MA, 1996.

\end{thebibliography}
\end{document}